\newcommand{\T}{\mathbb{T}}
\newcommand{\Z}{\mathbb{Z}}
\newcommand{\cB}{\mathcal{B}}
\newcommand{\cC}{\mathcal{C}}
\newcommand{\cF}{\mathcal{F}}
\newcommand{\cM}{\mathcal{M}}
\newcommand{\cN}{\mathcal{N}}
\renewcommand{\tilde}{\widetilde}
\renewcommand{\hat}{\widehat}
\renewcommand{\bar}{\overline}
\newcommand{\supp}{\mathrm{supp} \,}
\definecolor{magenta}{rgb}{1.0, 0.0, 1.0}
\definecolor{deepmagenta}{rgb}{0.8, 0.0, 0.8}
\definecolor{electricpurple}{rgb}{0.75, 0.0, 1.0}
\definecolor{lavenderindigo}{rgb}{0.58, 0.34, 0.92}
\definecolor{deeppink}{rgb}{1.0, 0.08, 0.58}
\definecolor{vividviolet}{rgb}{0.62, 0.0, 1.0}
\definecolor{darkorchid}{rgb}{0.6, 0.2, 0.8}
\definecolor{dartmouthgreen}{rgb}{0.05, 0.5, 0.06}
\definecolor{green(ncs)}{rgb}{0.0, 0.62, 0.42}
\definecolor{lawngreen}{rgb}{0.49, 0.99, 0.0}
\definecolor{green(pigment)}{rgb}{0.0, 0.65, 0.31}
\definecolor{green(html/cssgreen)}{rgb}{0.0, 0.5, 0.0}
\definecolor{neongreen}{rgb}{0.22, 0.88, 0.08}
\definecolor{orange(colorwheel)}{rgb}{1.0, 0.5, 0.0}
\definecolor{phlox}{rgb}{0.87, 0.0, 1.0}
\definecolor{tangelo}{rgb}{0.98, 0.3, 0.0}
\definecolor{yellow}{rgb}{1.0, 1.0, 0.0}
\definecolor{cadmiumyellow}{rgb}{1.0, 0.96, 0.0}
\definecolor{laserlemon}{rgb}{1.0, 1.0, 0.13}
\definecolor{uscgold}{rgb}{1.0, 0.8, 0.0}
\definecolor{uclagold}{rgb}{1.0, 0.7, 0.0}
\definecolor{darkspringgreen}{rgb}{0.09, 0.45, 0.27}
\definecolor{darkpastelgreen}{rgb}{0.01, 0.75, 0.24}
\definecolor{brightgreen}{rgb}{0.4, 1.0, 0.0}
\newtheorem{theorem}{Theorem}[section]
\newtheorem{remark}[theorem]{Remark}
\newtheorem{definition}[theorem]{Definition}
\newtheorem{lemma}[theorem]{Lemma}
\newtheorem{proposition}[theorem]{Proposition}
\newtheorem{corollary}[theorem]{Corollary}
\numberwithin{equation}{section}
\numberwithin{theorem}{section}
\newcommand{\vertiii}[1]{{\left\vert\kern-0.25ex\left\vert\kern-0.25ex\left\vert #1
    \right\vert\kern-0.25ex\right\vert\kern-0.25ex\right\vert}}
\newcommand{\dsp}{\displaystyle}
\renewcommand{\hat}{\widehat}
\renewcommand{\tilde}{\widetilde}
\newcommand{\real}{\mathbb{R}}
\newcommand{\dd}{\ \mathrm{d}}
\DeclareMathOperator{\dv}{div }
\DeclareMathOperator{\curl}{curl}
\title[Absence of anomalous dissipation]{Absence of anomalous dissipation for vortex sheets}
\author[Elgindi]{Tarek M. Elgindi$^1$}
\author[Lopes Filho]{Milton C. Lopes Filho$^2$}
\author[Nussenzveig Lopes]{Helena J. Nussenzveig Lopes$^2$}
\address{$^1$ Department of Mathematics, Duke University, Durham, NC 27708-0320 -- USA}
\email{tarek.elgindi@duke.edu}
\address{$^2$ Instituto de Matem\'atica, Universidade Federal do Rio de Janeiro, Cidade Universit\'aria -- Ilha do Fund\~ao, Caixa Postal 68530, 21941-909 Rio de Janeiro, RJ -- BRAZIL}
\email{mlopes@im.ufrj.br}
\email{hlopes@im.ufrj.br}
\begin{document}

\begin{abstract}   A family of solutions of the incompressible Navier-Stokes equations is said to present anomalous dissipation if energy dissipation due to viscosity does not vanish in the limit of small viscosity. In this article we present a proof of absence of anomalous dissipation for 2D flows on the torus, with an arbitrary non-negative measure plus an integrable function as initial vorticity and square-integrable initial velocity. Our result applies to flows with forcing and provides an explicit estimate for the dissipation at small viscosity. The proof relies on a new refinement of a classical inequality due to J. Nash.
\end{abstract}

\maketitle

{\bf Compiled on: {\today}}

\tableofcontents

\section{Introduction}

%The incompressible Euler equations on the torus $\T^2$ are given by:
%\begin{equation} \label{eq:Euler}
%\left\{\begin{array}{ll}
%\partial_t u + (u \cdot \nabla) u = -\nabla p + F, \quad & \text{ in } \T^2 \times (0,T) \\
%\dv u = 0, \quad & \text{ in } \T^2 \times [0,T) \\
%u(0,\cdot) = u_0, \quad & \text{ in } \T^2.
%\end{array}
%\right.
%\end{equation}

This article concerns families of solutions of the incompressible Navier-Stokes equations on the torus $\T^2 = [-\pi, \pi]^2$, given by:

\begin{equation} \label{eq:NS}
\left\{\begin{array}{ll}
\partial_t u^\nu + (u^\nu \cdot \nabla) u^\nu = -\nabla p^\nu + \nu \Delta u^\nu + F^\nu, \quad & \text{ in } \T^2 \times (0,T) \\
\dv u^\nu = 0, \quad & \text{ in } \T^2 \times [0,T) \\
u^\nu(\cdot,0) = u^\nu_0, \quad & \text{ in } \T^2,
\end{array}
\right.
\end{equation}
where $\nu>0$ is the viscosity, $u^\nu$ the flow velocity, $p^\nu$ is the pressure and $F^\nu = F^\nu(\cdot,t)$ is an external force. If $\nu =0$, system \eqref{eq:NS}  is called the Euler system. Energy methods yield the  identity:
\begin{equation} \label{eq:viscousenergy}
\frac{1}{2} \frac{d}{dt}  \int |u^\nu|^2 dx = -\nu \int |\omega^\nu|^2 dx + \int F^\nu \cdot u^\nu dx,
\end{equation}
where $\omega^\nu = \curl u^\nu$ is the vorticity of the flow. 
The term
\begin{equation} \label{eq:rateofdissipation}
\zeta^\nu \equiv \nu \int \int |\omega^\nu|^2 \dd x \dd t
\end{equation}
is called the energy dissipation term, or simply the dissipation term, in this context.

For initially smooth flows, it is easy to see that $\zeta^\nu$ vanishes as $\nu \rightarrow 0$, leading to solutions of the Euler system which satisfy an exact energy balance. However, this is not necessarily the case if the flow is initially nonsmooth. A sequence of solutions $\{u^\nu\}$ for which $\lim_{\nu \to 0} \zeta^\nu > 0$ is said to exhibit {\it anomalous dissipation}. The issue of presence or absence of anomalous dissipation in the vanishing viscosity limit is central to the study of turbulence, see for example \cite{Frisch1995} and references therein.

In \cite{JLLN2024}, F. Jin and co-authors proved that, under suitable assumptions on the forcings, strong convergence of the vanishing viscosity approximations is equivalent to the inviscid limit satisfying energy balance. Part of their proof involves showing that strong convergence implies absence of anomalous dissipation; the converse of which, however, is not necessarily true. Additionally, it is  proved in \cite{JLLN2024} that strong convergence of the vanishing viscosity approximation follows if initial vorticity is in a rearrangement-invariant space compactly embedded in $H^{-1}$, see also \cite{LNT2000}. The conceptual gap between initial vorticities in these spaces and initial vorticities whose singular parts are  nonnegative bounded Radon measures, called {\it vortex sheet initial data} is that the $L^2$-compactness of velocity approximations is lost. Consequently, a compensated compactness argument is called for to show that the weak limit of the vanishing viscosity approximation is a weak solution of the inviscid equations, see \cite{Delort1991,Majda1993,VecchiWu93}. In view of \cite[Theorem 2.7]{JLLN2024} these weak solutions can, indeed, have an energy defect. Still, it makes sense to ask whether the viscous approximations exhibit anomalous dissipation. This question was answered in the negative by L. De Rosa and J. Park in the nice recent work \cite{DeRosaPark2024} and, independently, in the present work.
 
Although the results obtained here are similar to those obtained in \cite{DeRosaPark2024}, there are a few differences. First, our work applies to flows with forcing, and it allows for more general ways in which initial data may be approximated. Also, our proof is quite different, involving direct quantitative estimates, and, as a result, provides a rate of vanishing of dissipation with respect to viscosity. The key ingredient of our work is a new refinement of Nash's inequality, given in Proposition \ref{prop:L1XiExist}. In contrast, De Rosa and Park use the classical Nash inequality \eqref{e:Nash} to obtain a key estimate, and a mollification which makes clever use of parabolic scaling. 

The remainder of this paper is organized as follows. In Section 2 we prove a convolution inequality and use it to prove an interpolation inequality. In Section 3 we derive a refinement of an inequality due to J. Nash, which, in turn, is a limiting case of the Gagliardo-Niremberg inequalities. In Section 4 we present an adaptation,  to bounded measures, of the inequality in the previous section, replacing the uniform integrability of $L^1$ functions with an estimate of the total mass of a measure in small disks. It is only in the final sections that we turn to solutions of the incompressible flow equations. In Section 5 we prove absence of anomalous dissipation, assuming suuitable uniform bounds on the Navier-Stokes solutions, as viscosity vanishes. In Section 6 we apply the result Section 5 to flows with initial vorticities whose singular part is a nonnegative measure and we obtain a rate for the vanishing of the dissipation. Finally, in Section 7, we remark on an extension of our results, describe an example which shows the necessity of some of the conditions we use to prove absence of anomalous dissipation and we present a few concluding remarks.

\section{Real analysis preliminaries}

In this section we will develop some real analysis preliminaries which will serve as a basis for our work.

Let $f$ and $g$ be smooth functions on $\mathbb{R}^d$ and consider the convolution
\[f\ast g = (f\ast g) (x) \equiv\int f(y)g(x-y) \dd y.\]
By Young's inequality, we have that
\begin{equation} \label{eq:Young}
\|f\ast g\|_{L^2} \leq \|f\|_{L^1}\|g\|_{L^2} .
\end{equation}

In what follows we will show that we can refine \eqref{eq:Young} if $g$ is supported in a ball $B_*$, replacing $\|f\|_{L^1}$ in \eqref{eq:Young} by something smaller (in particular, related to the integral of $|f|$ on balls of the same size as $B_*$).

\begin{lemma} \label{lem:convolineq}
Let $f\in L^1(\mathbb{R}^d)$ and $g\in L^2(\mathbb{R}^d)$ and assume that $\supp g$ is contained in a ball $B_*.$
Then,

\[\|f*g\|_{L^2}\leq \left(\sqrt{\|f\|_{L^1}\sup_{|B|=|B_*|}\|f\|_{L^1(B)}}\right)\|g\|_{L^2}\]

\end{lemma}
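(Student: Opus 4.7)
The plan is a direct Schur-type computation combining Cauchy--Schwarz with the support constraint on $g$, followed by Fubini. First I would write
\[
(f \ast g)(x) = \int_{x - B_*} f(y) g(x-y) \dd y,
\]
using that $g(x-y) = 0$ whenever $x - y \notin B_*$, i.e.\ $y \notin x - B_*$. Since $x - B_*$ is itself a ball with $|x - B_*| = |B_*|$, any integral of $|f|$ over this set is bounded by $\sup_{|B| = |B_*|} \|f\|_{L^1(B)}$.

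Next I would split the integrand as $|f(y)|^{1/2} \cdot |f(y)|^{1/2} |g(x-y)|$ and apply Cauchy--Schwarz on the measure $\dd y$ restricted to $x - B_*$:
\[
\left( \int_{x - B_*} |f(y)| |g(x-y)| \dd y \right)^2 \leq \left(\int_{x-B_*} |f(y)| \dd y\right) \left( \int_{x-B_*} |f(y)| |g(x-y)|^2 \dd y \right).
\]
The first factor on the right is $\leq \sup_{|B|=|B_*|} \|f\|_{L^1(B)}$, which pulls out as a constant.

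Then I would integrate in $x$ over $\mathbb{R}^d$ and apply Fubini to the remaining double integral:
\[
\int \int |f(y)| |g(x-y)|^2 \dd y \dd x = \int |f(y)| \left( \int |g(x-y)|^2 \dd x \right) \dd y = \|f\|_{L^1} \|g\|_{L^2}^2,
\]
using translation invariance of Lebesgue measure. Combining these two inequalities and taking a square root yields the stated bound.

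I do not anticipate any real obstacle: the argument is a weighted Cauchy--Schwarz trick made possible entirely by the localization of $g$, which restricts the $L^1$-mass of $f$ that actually contributes at each point $x$. The only subtlety worth noting in the write-up is the observation that $x - B_*$ is a translate of $-B_*$ and hence an admissible ball in the supremum on the right-hand side.
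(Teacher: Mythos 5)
Your proof is correct. The computation is the standard Schur-test argument: the localization of $g$ restricts the $y$-integration to the ball $x-B_*$ (a translate of the reflected ball, hence of measure $|B_*|$), the splitting $|f|=|f|^{1/2}\cdot|f|^{1/2}$ together with Cauchy--Schwarz produces the factor $\sup_{|B|=|B_*|}\|f\|_{L^1(B)}$ pointwise in $x$, and Fubini plus translation invariance yields the remaining factor $\|f\|_{L^1}\|g\|_{L^2}^2$. Taking square roots gives exactly the claimed constant $\bigl(\|f\|_{L^1}\sup_{|B|=|B_*|}\|f\|_{L^1(B)}\bigr)^{1/2}$.

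The paper proceeds differently: it fixes $f$, regards $g\mapsto f*g$ as a linear operator from $L^p_{B_*}$ to $L^p$, establishes the endpoint bounds $\|I\|_{L^1_{B_*}\to L^1}\leq\|f\|_{L^1}$ (Young) and $\|I\|_{L^\infty_{B_*}\to L^\infty}\leq\sup_{|B|=|B_*|}\|f\|_{L^1(B)}$ (localization), and then invokes Riesz--Thorin to obtain the geometric mean of the two constants at $p=2$. Your route is more elementary and self-contained, avoiding the complex-interpolation machinery entirely and making visible at the pointwise level where each factor in the constant originates. The paper's interpolation argument, on the other hand, delivers for free the whole scale $\|f*g\|_{L^p}\leq\|f\|_{L^1}^{1/p}\bigl(\sup_{|B|=|B_*|}\|f\|_{L^1(B)}\bigr)^{1-1/p}\|g\|_{L^p}$ for all $1\leq p\leq\infty$, although only the $p=2$ case is used in the sequel. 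Either proof is acceptable; your observation that $x-B_*$ is an admissible ball in the supremum is precisely the point that needs to be made explicit, and you make it.
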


\begin{proof}

Let us begin by noting that
\begin{equation} \label{eq:Linfty2Linfty}
\|f*g\|_{L^\infty}\leq \left(\sup_{B=|B_*|}\|f\|_{L^1(B)}\right) \|g\|_{L^\infty}.
\end{equation}

From  Young's inequality we have, additionally,
\begin{equation} \label{eq:L12L1}
\|f*g\|_{L^1}\leq \|f\|_{L^1}\|g\|_{L^1}.
\end{equation}

We now claim that
\begin{equation} \label{eq:interpclaim}
\|f*g\|_{L^2}\leq \|f\|_{L^1}^{1/2}\left(\sup_{|B|=|B_*|}\|f\|_{L^1(B)}\right)^{1/2}\|g\|_{L^2}.
\end{equation}

To see this let $L^p_{B_*}$ denote the Banach space of $L^p$ functions that vanish identically outside of $B_*$.

Fix $f\in L^1(\mathbb{R}^d)$ and consider the linear operator $I$ given by
\[
\begin{array}{ccc} I: L^p_{B_*}(\real^d) & \to & L^p (\real^d)\\
 g  & \mapsto & f*g
\end{array}.
\]

Then, from \eqref{eq:Linfty2Linfty}, we have
\[\|I\|_{L^\infty_{B_*}\rightarrow L^\infty} \leq \sup_{|B|=|B_*|}\|f\|_{L^1(B)}.\]
Also, \eqref{eq:L12L1} gives
\[\|I\|_{L^1_{B_*}\rightarrow L^1} \leq \|f\|_{L^1}.\]
We use the Riesz-Thorin theorem, see \cite{Folland}, to conclude that
\[\|I\|_{L^2_{B_*}\rightarrow L^2} \leq  \|f\|_{L^1}^{1/2}\left(\sup_{|B|=|B_*|}\|f\|_{L^1(B)}\right)^{1/2} .\]
This establishes the claim and, thus, concludes the proof.

\end{proof}

%\vspace{1cm}

\begin{lemma} \label{lem:Nvare_estimate}
    There exists a universal constant $C>0$ such that, if $f \in H^1(\T^2)$ with $\dsp{\int_{\T^2} f = 0}$, $0<\alpha < 1/2$, $0<\varepsilon <1$ and $\cN \in \real^+$, then:
    \begin{align} \label{eq:Nvare_estimate}
    \| f \|_{L^2}^2 \leq & C\left[ \cN^2 \| f\|_{L^1}
    \left(\sup_{z \in \T^2} \int_{\{|z-y| < \varepsilon^\alpha\}} |f(y)| \, \mathrm{d}y + \| f\|_{L^1} \frac{1}{\cN\varepsilon^\alpha}  \right)\right. \nonumber \\
    & \left.  + \frac{1}{\cN^2} \|\nabla f \|_{L^2}^2 \right] .
    \end{align}
\end{lemma}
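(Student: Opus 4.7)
The plan is to perform a physical-space splitting at scale $1/\cN$ using a smooth mollifier and to apply the convolution inequality of Lemma~\ref{lem:convolineq} to the low-frequency piece. I would fix once and for all a nonnegative $\phi \in C_c^\infty(\R^2)$ with $\supp \phi \subset B_1(0)$ and $\int \phi = 1$, and set $\phi_\cN(x) := \cN^2 \phi(\cN x)$, so that $\supp \phi_\cN \subset B_{1/\cN}(0)$ and, by scaling, $\|\phi_\cN\|_{L^2} = \cN\|\phi\|_{L^2}$. Writing $f = (f - f*\phi_\cN) + f*\phi_\cN$ and using $\|a+b\|_{L^2}^2 \leq 2\|a\|_{L^2}^2 + 2\|b\|_{L^2}^2$, the problem reduces to controlling the two pieces separately.

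For the remainder, the standard mollification estimate (via Cauchy--Schwarz on $\int \phi_\cN(y)[f(x)-f(x-y)]\,\mathrm{d}y$ combined with the fundamental theorem of calculus) yields
\[\|f - f*\phi_\cN\|_{L^2}^2 \leq C\cN^{-2}\|\nabla f\|_{L^2}^2,\]
which is exactly the last term on the right-hand side of \eqref{eq:Nvare_estimate}. For the mollified piece, applying Lemma~\ref{lem:convolineq} with $g = \phi_\cN$ and $B_* = B_{1/\cN}$ gives
\[\|f*\phi_\cN\|_{L^2}^2 \leq C\cN^2\|f\|_{L^1}\sup_{|B|=|B_{1/\cN}|}\|f\|_{L^1(B)}.\]

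The remaining task is to convert the supremum over balls of radius $1/\cN$ into the supremum over balls of radius $\varepsilon^\alpha$ appearing in the statement. I would split cases on the sign of $\cN\varepsilon^\alpha - 1$. If $\cN\varepsilon^\alpha \geq 1$, then every ball of radius $1/\cN$ is contained in some ball of radius $\varepsilon^\alpha$, so the $1/\cN$-supremum is dominated by the $\varepsilon^\alpha$-supremum. If $\cN\varepsilon^\alpha < 1$, then $(\cN\varepsilon^\alpha)^{-1} > 1$, and the crude bound $\|f\|_{L^1(B)} \leq \|f\|_{L^1} \leq \|f\|_{L^1}/(\cN\varepsilon^\alpha)$ is absorbed in the error term. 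In both cases one obtains
\[\sup_{|B|=|B_{1/\cN}|}\|f\|_{L^1(B)} \leq \sup_{z \in \T^2}\int_{|z-y|<\varepsilon^\alpha} |f(y)|\,\mathrm{d}y + \frac{\|f\|_{L^1}}{\cN\varepsilon^\alpha},\]
and inserting this into the previous display and adding the gradient bound gives \eqref{eq:Nvare_estimate}.

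The one technical subtlety I expect is the torus geometry: the bump $\phi_\cN$ sits inside a genuine ball of $\T^2$ only when $1/\cN$ is smaller than the injectivity radius, so the mollification argument is cleanest in the regime $\cN \gtrsim 1$. For small $\cN$ the inequality instead follows directly from the Poincar\'e inequality applied to the mean-zero function $f$ (which is where the hypothesis $\int_{\T^2} f = 0$ enters), and since the RHS of \eqref{eq:Nvare_estimate} already contains the term $\cN^{-2}\|\nabla f\|_{L^2}^2$, the small-$\cN$ regime is absorbed by enlarging the universal constant $C$. The restrictions $\alpha \in (0,1/2)$ and $\varepsilon \in (0,1)$ enter only via the product $\varepsilon^\alpha$ and no finer structure of them is required.
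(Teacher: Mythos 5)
Your proof is correct, and it takes a genuinely different route from the paper's. The paper works on the Fourier side: it uses Plancherel to split into frequencies $|k|_\infty < \cN$ and $|k| \geq \cN$, controls the high modes by $\cN^{-2}\|\nabla f\|_{L^2}^2$, writes the low-mode projection as convolution with the square Dirichlet kernel $D_N$, and then splits $D_N$ itself into $D_N 1_{|z|_\infty < \varepsilon^\alpha}$ (handled by Lemma \ref{lem:convolineq}, using $\|D_N\|_{L^2}^2 \sim N^2$) and $D_N 1_{|z|_\infty \geq \varepsilon^\alpha}$ (handled by Young's inequality together with the tail bound $\|D_N 1_{|z|_\infty\geq\varepsilon^\alpha}\|_{L^2}^2 \lesssim N/\varepsilon^\alpha$, which is where the middle term $\|f\|_{L^1}^2/(\cN\varepsilon^\alpha)$ originates). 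You instead do the frequency cut in physical space with a smooth mollifier at scale $1/\cN$: the remainder $f - f\ast\phi_\cN$ plays the role of the high modes, Lemma \ref{lem:convolineq} applied to $g=\phi_\cN$ (with $\|\phi_\cN\|_{L^2}^2 \sim \cN^2$) plays the role of the inner Dirichlet piece, and the middle term appears only as slack in your case distinction $\cN\varepsilon^\alpha \gtrless 1$ rather than as a genuine kernel-tail contribution. Both arguments pivot on the same key ingredient, Lemma \ref{lem:convolineq}; yours avoids the Dirichlet-kernel computations entirely and even shows the bound holds without the middle term when $\cN\varepsilon^\alpha \geq 1$, while the paper's Fourier version keeps the argument structurally parallel to Nash's classical proof. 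Your handling of the two peripheral issues (the torus injectivity radius for small $\cN$ via Poincar\'e, which is where mean-freeness enters, and the scaling of $\|\phi_\cN\|_{L^2}$ in $d=2$) is also correct.
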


\begin{proof}
     We begin with the Plancherel identity:
     \begin{equation} \label{eq:Plancherel}
     \|f\|_{L^2}^2 = \sum_{k\in \Z^2} |\hat{f}(k)|^2,
     \end{equation}
where $\hat{f}(k)$ is the $k$-th Fourier coefficient of $f$.

Consider the $\ell_\infty$ norm of $z=(z_1,z_2)$ on $\T^2$, $|z|_\infty = \max\{|z_1|,|z_2|\}.$

Using \eqref{eq:Plancherel}, we have
\begin{align} \label{eq:first_estimate}
   & \|f\|_{L^2}^2  = \sum_{|k|_\infty < \cN} |\hat{f}(k)|^2 + \sum_{|k| \geq \cN} |\hat{f}(k)|^2 \nonumber \\
   & \leq \sum_{|k|_\infty < \cN} |\hat{f}(k)|^2 + \frac{1}{\cN^2}\sum_{|k| \geq \cN} C|k|^2|\hat{f}(k)|^2 \leq \sum_{|k|_\infty < \cN} |\hat{f}(k)|^2  + \frac{1}{\cN^2} \|\nabla f \|_{L^2}^2. 
\end{align}

     We introduce the projection of $f$ onto the first $N\equiv \lceil \cN \rceil $ Fourier modes, with respect to the $\ell_\infty$-norm on $\T^2$:
     \begin{equation} \label{eq:PNf}
         P_N f = \left( \hat{f} \, 1_{|k|_\infty < N} \right)^{\vee}.
     \end{equation}

Therefore $\sum_{|k|_\infty < \cN} |\hat{f}(k)|^2 \leq \|P_Nf\|_{L^2}^2.$

Note that, since we are working with the $\ell_\infty$-norm on $\T^2$, it follows that
     \begin{equation} \label{eq:PNfconvol}
         P_N f (x) = \int_{\T^2} f(x-y) D_N(y) \, \mathrm{d}y,
     \end{equation}
where $D_N$ is the square Dirichlet kernel, $D_N(z_1,z_2)=d_N(z_1)d_N(z_2)$, with $d_N(z)=\sum_{-N\leq k \leq N} e^{ikz}$, the $1$-dimensional Dirichlet kernel. Hence
\begin{equation} \label{eq:Dirichlet2dim}
    D_N (z_1,z_2) = \frac{\sin[(2N+1)(z_1/2)]}{\sin(z_1/2)} \, \frac{\sin[(2N+1)(z_2/2)]}{\sin(z_2/2)},
\end{equation}
see \cite{Rudin1976}.

     With this notation we now write, for $\varepsilon < 1$:
     \begin{equation}
         P_N f = f \ast (D_N 1_{|z|_\infty < \varepsilon^\alpha}) + f \ast (D_N 1_{|z|_\infty \geq \varepsilon^\alpha}) \equiv P_N^I f + P_N^O f.
     \end{equation}

We estimate each of these terms separately.
For $P_N^I$ we will use the refined estimate for convolutions given in Lemma \ref{lem:convolineq} with $g = D_N 1_{|z|_\infty < \varepsilon^\alpha}$. We deduce that
\begin{equation} \label{eq:PNIf_estimate}
    \|P_N^I f\|_{L^2}^2 \leq  \| f\|_{L^1(\T^2)} \left(\sup_{z \in \T^2} \int_{\{|z-y|_\infty < \varepsilon^\alpha\}} |f(y)| \dd y\right)\|D_N\|_{L^2(\T^2)}^2.
\end{equation}

It is easy to see that
\begin{equation} \label{eq:dN_estimate}
    \|D_N\|_{L^2}^2 = \|d_N\|_{L^2}^4 = 4\pi N^2.
\end{equation}
Using \eqref{eq:dN_estimate} in \eqref{eq:PNIf_estimate}  yields
\begin{equation} \label{eq:PNIf_estimate_final}
    \|P_N^I f\|_{L^2}^2 \leq C N^2 \| f\|_{L^1(\T^2)} \sup_{z \in \T^2} \int_{\{|z-y|_\infty < \varepsilon^\alpha\}} |f(y)| \, \mathrm{d}y.
\end{equation}

 Next we estimate $P_N^O f$. We begin with the elementary observation that, if $0<\rho < \pi$, then
     \begin{equation} \label{e:intdN2}
     \int_{\rho}^\pi d_N^2(x)\,\mathrm{d}x \leq \frac{\pi^2}{\rho}.\end{equation}

Using \eqref{e:intdN2} and the formula for $D_N$ given in \eqref{eq:Dirichlet2dim} we find
\begin{equation} \label{eq:DNO_estimate}
   \|D_N 1_{|z|_\infty \geq \varepsilon^\alpha} \|_{L^2(\T^2)}^2 \leq 8 \|d_N\|_{L^2(\T^1)}^2 \int_{\varepsilon^\alpha}^\pi d_N^2(x) \, \mathrm{d}x \leq CN\frac{1}{\varepsilon^\alpha}.
\end{equation}
By Young's inequality we deduce
\begin{equation} \label{eq:PNOf_estimate_final}
 \|P_N^O f\|_{L^2}^2 \leq C \|f\|_{L^1}^2 N\frac{1}{\varepsilon^\alpha}.
\end{equation}

From \eqref{eq:PNIf_estimate_final} and \eqref{eq:PNOf_estimate_final} it follows that
\begin{equation} \label{eq:PNf_estimate}
    \|P_N f\|_{L^2}^2 \leq C N^2 \| f\|_{L^1} \left( \sup_{z \in \T^2} \int_{\{|z-y|_\infty < \varepsilon^\alpha\}} |f(y)| \, \mathrm{d}y + \| f\|_{L^1} \frac{1}{N\varepsilon^\alpha} \right).
\end{equation}
 Finally, putting together \eqref{eq:PNf_estimate} and \eqref{eq:first_estimate} yields the desired result.

\end{proof}

\section{A refinement of Nash's inequality}

Our main results depend on a refinement of an inequality due to J. Nash  which, in dimension $2$, corresponds to 
\begin{equation} \label{e:Nash} 
\left(\|f\|_{L^2(\real^2)}^2\right)^2 \leq \|f\|_{L^1(\real^2)}^2 \|\nabla f\|_{L^2(\real^2)}^2,
\end{equation}
for all $f \in H^1(\real^2) \cap L^1(\real^2)$, see \cites{Nash1958,BDS2020}. This is the content of the present Section.

%We use Lemma \ref{lem:Nvare_estimate} to derive the previously mentioned refinement of Nash's inequality.

Fix $K>0$. Let $\cF$ be a family of functions in $L^1$ such that

\begin{enumerate}[label={\bf (\Alph*)}, ref=
\textcolor{black}{\bf \Alph*},nosep]
    \item \label{i:A}$\|f\|_{L^1} \leq K$ for all $f\in \cF$, and
    \item[]
    \item \label{i:B} \[ \lim_{r \to 0^+} \; \sup_{f \in \cF} \, \sup_{z\in \T^2}\int_{\{|z-y| < r\}} |f(y)| \dd y \, = \, 0.\]
\end{enumerate}

Let 
\begin{equation} \label{eq:eta}
    \eta(r) = \max\left\{ \frac{1}{K}\sup_{f\in\cF}\sup_{z \in \T^2} \int_{\{|z-y| < r\}} |f(y)| \dd y, \,\frac{r}{\pi}\right\}, \quad \text{ for } 0\leq r\leq \pi.
\end{equation}

It is immediate that, if $0\leq r \leq \pi$  and $f\in \cF$, then:
\begin{align}
    0\leq \eta \leq 1; \qquad r \leq \pi \eta(r); \label{eqalign:etabds1} \\
    \sup_{z \in \T^2} \int_{\{|z-y| < r\}} |f(y)| \dd y \leq K \, \eta(r). \label{eqalign:etabds3}
\end{align}

\begin{proposition} \label{cor:L1concave}
Let $K>0$ and consider  $\cF \subset H^1(\T^2)$ a family of mean-free functions satisfying \ref{i:A} and \ref{i:B}. Then there exists a universal constant $C>0$ such that
\begin{equation} \label{eq:etaest}
    \|f\|_{L^2}^2 \leq C \, (K^2+1) \, \|\nabla f\|_{L^2} \sqrt{\eta \left( \|\nabla f\|_{L^2}^{-1/4}\right)}
\end{equation}
for all $f \in \cF$ such that $\|\nabla f\|_{L^2} > 1$.
\end{proposition}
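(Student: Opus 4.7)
The approach is to apply Lemma~\ref{lem:Nvare_estimate} and carefully optimize the two free parameters $\cN$ and $\varepsilon$ in terms of $G := \|\nabla f\|_{L^2}$. Writing $r := \varepsilon^\alpha$, assumption~\ref{i:A} gives $\|f\|_{L^1}\leq K$, and \eqref{eqalign:etabds3} gives $\sup_{z}\int_{|z-y|<r}|f(y)|\,\mathrm{d}y \leq K\eta(r)$. Inserting both bounds into \eqref{eq:Nvare_estimate} reduces the problem to proving
\[
\|f\|_{L^2}^2 \;\leq\; C\!\left[K^2\cN^2\eta(r) \;+\; \frac{K^2\cN}{r} \;+\; \frac{G^2}{\cN^2}\right] \;\leq\; C(K^2+1)\,G\,\sqrt{\eta(r)}
\]
for a suitable choice of $(\cN, r)$ with $r = G^{-1/4}$.

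The plan is then to fix any $\alpha\in(0,1/2)$, set $r = G^{-1/4}$ (equivalently $\varepsilon = G^{-1/(4\alpha)}$, which lies in $(0,1)$ since $G>1$), and choose $\cN$ to balance the first and third terms by solving $\cN^2 K^2\eta(r)=G^2/\cN^2$. This gives $\cN^2=G/(K\sqrt{\eta(r)})$, and both balanced terms then collapse to exactly $KG\sqrt{\eta(r)}$ — already dominated by $(K^2+1)G\sqrt{\eta(r)}$ since $K\leq K^2+1$.

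The main obstacle is absorbing the middle tail term. Substituting the choices above, $K^2\cN/r = K^{3/2}G^{3/4}\eta(r)^{-1/4}$, which is a priori larger than the target $KG\sqrt{\eta(r)}$. The key observation is the floor $\eta(r)\geq r/\pi$ built into the definition \eqref{eq:eta} and recorded in \eqref{eqalign:etabds1}: with $r=G^{-1/4}$ this yields $\eta(r)^{3/4}\geq \pi^{-3/4}G^{-3/16}$, so
\[
\frac{K^2\cN}{r} \;\leq\; \pi^{3/4}\,K^{3/2}\,G^{15/16}\sqrt{\eta(r)} \;\leq\; \pi^{3/4}\,K^{3/2}\,G\,\sqrt{\eta(r)}
\]
(using $G>1$ in the last step), which is $\leq \pi^{3/4}(K^2+1)G\sqrt{\eta(r)}$ by the elementary bound $K^{3/2}\leq K^2+1$. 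Summing the three contributions yields the desired inequality with a universal constant, and one sees that the exponent $-1/4$ on $\|\nabla f\|_{L^2}$ is precisely the choice that makes the floor $\eta(r)\geq r/\pi$ yield the exact slack needed to absorb the tail term.
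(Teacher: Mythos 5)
Your proof is correct and follows essentially the same strategy as the paper's: apply Lemma~\ref{lem:Nvare_estimate} with parameters tuned to $G=\|\nabla f\|_{L^2}$, evaluate $\eta$ at $G^{-1/4}$, and use the floor $\eta(r)\geq r/\pi$ from \eqref{eqalign:etabds1} to absorb the Dirichlet-tail term. The only cosmetic difference is that the paper fixes $\alpha=1/4$, ties $\cN=\varepsilon^{-1/2}$, and optimizes the single parameter $\varepsilon^{\ast}=G^{-1}\sqrt{\eta(G^{-1/4})}$ (then invokes monotonicity of $\eta$ to pass from $\eta((\varepsilon^{\ast})^{1/4})$ to $\eta(G^{-1/4})$), whereas you decouple $\cN$ from $\varepsilon$ and balance the first and third terms directly; both routes land on the same bound.
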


\begin{proof}
    Let $f \in \cF$  and assume $\|\nabla f\|_{L^2} > 1$. Let $0< \varepsilon < 1$ and use $\alpha = 1/4$ and
    $\cN=1/\sqrt{\varepsilon}$   
  in Lemma \ref{lem:Nvare_estimate}. Then \eqref{eq:Nvare_estimate} becomes
        \begin{align} \label{eq:varestL1}
    \| f \|_{L^2}^2 \leq & C\Biggl(
    \frac{\|f\|_{L^1}}{\varepsilon}
    \sup_{z \in \T^2} \int_{\{|z-y| < \varepsilon^{1/4}\}} |f(y)| \, \mathrm{d}y
   \, +  \,
    \frac{\| f\|_{L^1}^2}{\varepsilon} \varepsilon^{1/4} + \, \,\varepsilon \|\nabla f \|_{L^2}^2 \Biggr)  .
    \end{align}

We wish to use $\varepsilon^\ast$ in \eqref{eq:varestL1}, where
\[\varepsilon^\ast \equiv \frac{1}{\|\nabla f\|_{L^2}} \sqrt{\eta\left(\|\nabla f\|_{L^2}^{-1/4}\right)},\]
with $\eta$ the function that was introduced in \eqref{eq:eta}. We note in passing that the hypothesis $\|\nabla f\|_{L^2} > 1$ implies that $\|\nabla f\|_{L^2}^{-1/4} < 1$, thus in the domain of $\eta$; since $0\leq \eta \leq 1$ (see \eqref{eqalign:etabds1}) we  conclude that  $0<\varepsilon^\ast < 1$.

Let $\varepsilon=\varepsilon^\ast$. Then, for the first term on the right-hand-side of \eqref{eq:varestL1}, we have

\begin{align} \label{eq:term1varestL1}
    & \frac{\|f\|_{L^1}}{\varepsilon^\ast} \sup_{z \in \T^2}   \int_{\{|z-y| < (\varepsilon^\ast)^{1/4}\}} |f(y)| \, \mathrm{d}y  \leq
        \frac{1}{\varepsilon^\ast} K^2 \,
        \eta \left((\varepsilon^\ast)^{1/4}\right) \nonumber \\
   % & \nonumber \\
    & \leq  \frac{K^2\|\nabla f\|_{L^2}}{\sqrt{\eta\left(\|\nabla f\|_{L^2}^{-1/4}\right)}} \,
    \eta \left(\frac{1}{\|\nabla f\|_{L^2}^{1/4}}\right) = K^2\|\nabla f\|_{L^2} \sqrt{\eta\left(\|\nabla f\|_{L^2}^{-1/4}\right)},
    \end{align}
where we used \eqref{eqalign:etabds3} in the first estimate and \eqref{eqalign:etabds1} in the second one.

Next, we estimate the second term in \eqref{eq:varestL1}:
\begin{align} \label{eq:term2varestL1}
\frac{\|f\|_{L^1}^2}{\varepsilon^\ast} (\varepsilon^\ast)^{1/4}  \leq
\frac{K^2}{\varepsilon^\ast} \pi \eta ((\varepsilon^\ast)^{1/4}) 
 \leq \pi K^2 \|\nabla f\|_{L^2} \sqrt{\eta\left(\|\nabla f\|_{L^2}^{-1/4}\right)},
\end{align}
where we first used \eqref{eqalign:etabds1}, followed by the previous estimate, \eqref{eq:term1varestL1}.

Finally, for the last term of \eqref{eq:varestL1} we have:
\begin{equation} \label{eq:term3varestL1}
    \varepsilon^\ast \|\nabla f\|_{L^2}^2 = \|\nabla f\|_{L^2}\sqrt{\eta \left( \|\nabla f\|_{L^2}^{-1/4}\right)}.
\end{equation}

Substituting \eqref{eq:term1varestL1}, \eqref{eq:term2varestL1} and \eqref{eq:term3varestL1} in \eqref{eq:varestL1} yields the desired result.
\end{proof}

%Recall that a function $g$ is called superlinear if
%$|g(y)|/|y| \to +\infty$, respectively, superquadratic if
%$|g(y)|/|y|^2 \to +\infty$, as $|y|\to +\infty$.

Let us consider an extension of the function $\eta$, defined in \eqref{eq:eta}, to all of $[0,+\infty)$, given by
\begin{equation} \label{eq:bareta}
\overline{\eta} = \overline{\eta}(r) =
\left\{
\begin{array}{ll}
\eta(r) & \text{ if } 0 \leq r \leq \pi, \\
1 & \text{ if } r > \pi.
\end{array}
\right.
\end{equation}

We will use $\overline{\eta}$ to prove our next result.

\begin{proposition} \label{prop:L1XiExist}
Let $K>0$ and let $\cF \subset H^1(\T^2)$ be a family of mean-free functions such that \ref{i:A} and \ref{i:B} hold.
Then there exists $\Upsilon \in \cC^1$, convex, increasing and superquadratic, such that
\[\Upsilon\left( \|f\|_{L^2}^2 \right) \leq \|\nabla f \|_{L^2}^2,\]
for all $f \in \cF$.
\end{proposition}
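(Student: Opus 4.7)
To prove the proposition I plan to invert the estimate from Proposition~\ref{cor:L1concave} and then regularize the resulting function. To begin, I bring Proposition~\ref{cor:L1concave} and the Poincaré inequality on $\T^2$ into a common form. Squaring Proposition~\ref{cor:L1concave} yields $\|f\|_{L^2}^4\leq M\|\nabla f\|_{L^2}^2\bar\eta(\|\nabla f\|_{L^2}^{-1/4})$ for every $f\in\cF$ with $\|\nabla f\|_{L^2}>1$. For $\|\nabla f\|_{L^2}\leq 1$, the Poincaré inequality on $\T^2$ (valid since $\cF$ consists of mean-free functions) gives $\|f\|_{L^2}^4\leq C\|\nabla f\|_{L^2}^2$; using the lower bound $\bar\eta(r)\geq 1/\pi$ for $r\geq 1$ (which follows from \eqref{eqalign:etabds1} and \eqref{eq:bareta}), the Poincaré constant can be absorbed into a larger $M$ so that the displayed inequality holds for all values of $\|\nabla f\|_{L^2}$. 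With the shorthand $u=\|f\|_{L^2}^2$, $v=\|\nabla f\|_{L^2}^2$, and $G(v)=\sqrt{Mv\,\bar\eta(v^{-1/4})}$, the estimate reads $u\leq G(v)$.

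The two qualitative facts about $G$ that I use are $G(v)\to\infty$ as $v\to\infty$ (which follows from $\bar\eta(r)\geq r/\pi$, giving $G(v)\gtrsim v^{3/8}$) and, crucially from assumption \ref{i:B}, $G(v)/\sqrt v=\sqrt{M\bar\eta(v^{-1/4})}\to 0$. Since $G$ is the product of an increasing and a non-increasing function of $v$, it need not itself be monotone, so I replace it by its non-decreasing envelope $\bar G(v)=\sup_{0\leq v'\leq v}G(v')$. A short $\varepsilon$-$v_\varepsilon$ argument shows that $\bar G$ inherits both properties. I then form the generalized right-inverse $\Psi(u)=\inf\{v\geq 0:\bar G(v)\geq u\}$; from $u\leq\bar G(v)$ one gets $\Psi(u)\leq v$, so $\Psi(\|f\|_{L^2}^2)\leq\|\nabla f\|_{L^2}^2$, and the decay $\bar G(v)/\sqrt v\to 0$ translates directly into $\Psi(u)/u^2\to\infty$.

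The remaining obstacle is that $\Psi$ is merely non-decreasing, not $C^1$ or convex. To fix this I set
\[\phi(u)=\tfrac12\inf_{u'\geq u}\frac{\Psi(u')}{(u')^2},\]
which is non-decreasing, tends to $+\infty$, and satisfies $u^2\phi(u)\leq\tfrac12\Psi(u)$. After replacing $\phi$ by a continuous non-decreasing minorant with the same properties (a moving average of $\phi$ over a unit interval, for example), I define
\[\Upsilon(u)=\int_0^u 2s\,\phi(s)\,\mathrm{d}s.\]
This $\Upsilon$ is $C^1$ with non-decreasing derivative $\Upsilon'(u)=2u\phi(u)$, hence convex and strictly increasing; monotonicity of $\phi$ gives $\Upsilon(u)\leq u^2\phi(u)\leq\Psi(u)\leq v$, which is the required bound. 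Superquadraticity follows from the estimate $\Upsilon(u)/u^2\geq(1-(s_0/u)^2)\phi(s_0)$, valid for any fixed $s_0\leq u$, since $\phi(s_0)$ can be made arbitrarily large by taking $s_0$ large. The quantitative content of the proposition is entirely supplied by Proposition~\ref{cor:L1concave} and assumption \ref{i:B}; the remainder is soft envelope manipulation, and the hardest single step is ensuring that this $C^1$-convex upgrade preserves both the inequality and the superquadratic behaviour.
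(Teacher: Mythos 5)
Your argument is correct, but the construction of $\Upsilon$ is genuinely different from the paper's. The paper also starts from Proposition~\ref{cor:L1concave} plus Poincar\'e, but then exploits the specific form of the bound: it defines $\Phi(x)=\int_0^x C\sqrt{\overline{\eta}(\pi y^{-1/4})}\,\mathrm{d}y$, whose integrand is continuous, positive and non-increasing (because $\overline{\eta}$ is continuous and non-decreasing), so that $\Phi$ is automatically $\cC^1$, concave, increasing, sublinear, and satisfies $\Phi(x)\geq Cx\sqrt{\overline{\eta}(\pi x^{-1/4})}\geq \|f\|_{L^2}^2$ at $x=\|\nabla f\|_{L^2}$; then $\Upsilon=(\Phi^{-1})^2$ has all the required properties in one stroke. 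You instead treat the bound as a black box $u\leq G(v)$, pass to the monotone envelope $\bar G$, take the generalized inverse $\Psi$, and rebuild convexity and $\cC^1$ regularity by integrating a regularized minorant of $\inf_{u'\geq u}\Psi(u')/(u')^2$. This is more laborious but more robust: it only uses that $G(v)\to\infty$ and $G(v)/\sqrt{v}\to 0$, not the explicit structure of $G$. What it gives up is explicitness: the paper's $\Phi$ is the object that later produces the quantitative dissipation rate in \eqref{eq:L1rate} and Remark~\ref{rem:Delortrate}, whereas your $\Upsilon$, built from envelopes and infima, would be harder to track quantitatively (though the proposition as stated only asks for existence). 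Two small points to tidy up: with $v=\|\nabla f\|_{L^2}^2$ the argument of $\overline{\eta}$ should be $v^{-1/8}$, not $v^{-1/4}$ (harmless, since you only use $\overline{\eta}(r)\geq r/\pi$ for $r\leq\pi$, positivity, and the vanishing at $0$ from \ref{i:B}); and your moving-average regularization of $\phi$ needs a convention near $u=0$ --- note that $u=\bar G(\Psi(u))\leq\sqrt{M\Psi(u)}$ gives $\Psi(u)/u^2\geq 1/M$, so $\phi$ is bounded below by a positive constant and the construction closes without difficulty.
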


\begin{proof}
Let $f\in \cF$.
We have already established that, if $\|\nabla f\|_{L^2}>1$, then
 \[\|f\|_{L^2}^2 \leq C  \, \|\nabla f\|_{L^2}
 \sqrt{\eta \left( \|\nabla f\|_{L^2}^{-1/4}\right)},\]
for some constant $C>0$, where $C$ depends on $K$ and
additional universal constants. Using the Poincar\'e inequality and the definition of $\bar\eta,$ it is not difficult to deduce that

%If $\|\nabla f\|_{L^2} > 1$ then
%\[\eta \left( \|\nabla f\|_{L^2}^{-1/4}\right) = \overline{\eta}\left( \|\nabla f\|_{L^2}^{-1/4}\right)
%\leq
%\overline{\eta}\left( \pi \|\nabla f\|_{L^2}^{-1/4}\right)
%.\]

%Next let $f \in \cF$ be such that $\|\nabla f\|_{L^2} \leq 1$. Using the Poincar\'e inequality for mean-free functions we have, for some constant $C_P>0$, that
%\[\|f\|_{L^2}^2\leq C_P \|\nabla f\|_{L^2}^2 \leq C_P \|\nabla f\|_{L^2} = C_P \|\nabla f\|_{L^2}\sqrt{\overline{\eta}\left( \pi \|\nabla f\|_{L^2}^{-1/4}\right)}.\]

%In summary, we now have that
\begin{equation} \label{eq:L1etabarest}
  \|f\|_{L^2}^2 \leq C   \|\nabla f\|_{L^2} \,
  \sqrt{\overline{\eta}\left( \pi \|\nabla f\|_{L^2}^{-1/4}\right)}
\end{equation}
for all $f \in \cF$.
We note that the function $\overline{\eta}$ is continuous and non-decreasing. %In addition, $\lim_{r \to +\infty} \overline{\eta}(r) = 1$, $\lim_{r \to 0} \overline{\eta}(r) = 0$, and, if $r < \pi$, then $\overline{\eta}(r) = \eta(r) \geq r/\pi$.

Now let 
\[\Phi(x) = \int_0^x C\sqrt{\overline{\eta}\left(\pi y^{-1/4}\right)} \dd y.\] It is not difficult to see that $\Phi$ is concave, increasing, and sublinear. Furthermore, since $\bar\eta$ is non-decreasing, we have that 
\[\Phi(x)\geq Cx\sqrt{\overline{\eta}\left(\pi x^{-1/4}\right)}.\] Consequently, we have from \eqref{eq:L1etabarest} that 
\[\|f\|_{L^2}^2 \leq  \Phi\Big(\|\nabla f\|_{L^2}\Big).\] Since $\Phi$ is $\cC^1$, increasing, concave and sublinear it follows that it has an inverse $\Phi^{-1}$ which is $\cC^1$, increasing, convex and superlinear. 
Hence,
\begin{equation} \label{eq:L1Xi}
    \Upsilon=\Upsilon(x) = (\Phi^{-1})^2(x)
\end{equation}
belongs to $\cC^1$, is increasing, convex and superquadratic. Clearly, the function $\Upsilon$ is such that $\Upsilon (\|f\|_{L^2}^2) \leq \|\nabla f \|_{L^2}^2$ for all $f \in \cF$, as desired. This concludes the proof.

\end{proof}

\section{An extension to bounded Radon measures}

The purpose of this section is to obtain a special version of Proposition \ref{prop:L1XiExist} valid for families of mean-free functions in $H^1(\T^2)$ which can be written as the sum of a nonnegative Radon measure in $H^{-1}$ and an $L^p(\T^2)$ function, for some $p>1$. We assume that this family is uniformly bounded in $H^{-1} +L^p$.

Let $\mu$ be a nonnegative measure in $H^{-1}(\T^2)$. We begin by recalling a basic estimate on the mass of $\mu$ in small discs, namely
\begin{equation} \label{eq:logrho_estimate}
 \int_{\{|z-y|  < \rho\}}  \mathrm{d}\mu(y)  \leq C \|\mu\|_{H^{-1}} |\log \rho|^{-1/2}.
\end{equation}
This estimate was established, in this form, in \cite{Schochet1995}, where the author used it to study limits of singular solutions of the 2D incompressible Euler equations. It is also at the heart of the proof of existence of weak solutions to the 2D incompressible Euler equations with vortex sheet initial data of a distinguished sign, see \cite{Delort1991}.

We begin with a proposition which is analogous to Proposition \ref{cor:L1concave}.

 %    \LopesComment{Of course we need to add a paragraph explaining this better!}

\begin{proposition} \label{cor:Delortconcave}
   Let $f \in H^1(\T^2)$ such that $\dsp{\int_{\T^2} f = 0}$. Assume that $f = \mu + w$, with $\mu \in \cB \cM\cap H^{-1}$, $\mu \geq 0$, and $w \in L^p(\T^2)$, for some $p>1$. Then there exists a universal constant $C>0$ such that,
   \begin{equation} \label{eq:logest}
    \| f \|_{L^2}^2 \leq C\left[\|f\|_{L^1}\left(\|\mu\|_{H^{-1}} + \|w\|_{L^p} + \|f\|_{L^1} \right) +1\right]\frac{\|\nabla f\|_{L^2}}{\sqrt[4]{\log \|\nabla f\|_{L^2}}},
\end{equation}
whenever $\|\nabla f\|_{L^2} > e^2$.
\end{proposition}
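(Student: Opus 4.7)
The plan is to mimic the proof of Proposition \ref{cor:L1concave}, replacing the abstract modulus $\eta$ by the explicit decay rates available for the two summands of the decomposition $f=\mu+w$.

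First I would invoke Lemma \ref{lem:Nvare_estimate} with $\alpha=1/4$: abbreviating $G:=\|\nabla f\|_{L^2}$, $L:=\|f\|_{L^1}$, $M:=\|\mu\|_{H^{-1}}$, $W:=\|w\|_{L^p}$, and $p'=p/(p-1)$, the lemma gives, for every $\varepsilon\in(0,1)$ and $\cN>0$,
\[
\|f\|_{L^2}^2 \leq C\left[\cN^2 L\left(\sup_{z\in\T^2}\int_{\{|z-y|<\varepsilon^{1/4}\}}|f(y)|\,\mathrm{d}y + \frac{L}{\cN\varepsilon^{1/4}}\right) + \frac{G^2}{\cN^2}\right].
\]

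Next I would dominate the supremum using $f=\mu+w$: since $\mu\geq 0$, at the level of measures one has $\int_{B_r}|f|\,\mathrm{d}y \leq \int_{B_r}\mathrm{d}\mu + \int_{B_r}|w|\,\mathrm{d}y$. Schochet's estimate \eqref{eq:logrho_estimate} controls the first piece by $CM|\log r|^{-1/2}$, while H\"older's inequality controls the second by $W(\pi r^2)^{1/p'}$. Taking $r=\varepsilon^{1/4}$, the supremum is therefore at most $CM|\log\varepsilon|^{-1/2}+CW\varepsilon^{1/(2p')}$.

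Guided by the choices used in the proof of Proposition \ref{cor:L1concave}, with the effective modulus $\eta(r)\sim|\log r|^{-1/2}$ coming from $\mu$, I would then set $\cN^2=G(\log G)^{1/4}$ and $\varepsilon=1/\cN^2$. Since $G>e^2$, $|\log\varepsilon|$ is comparable to $\log G$, and the four contributions on the right of the displayed inequality specialize to
\[
\mathrm{(I)}\ CLMG(\log G)^{-1/4},\qquad \mathrm{(II)}\ CLW G^{1-1/(2p')}(\log G)^{1/4},
\]
\[
\mathrm{(III)}\ CL^2 G^{3/4}(\log G)^{3/16},\qquad \mathrm{(IV)}\ G(\log G)^{-1/4}.
\]
Terms (I) and (IV) already display the form claimed in \eqref{eq:logest}; to finish, one only needs to absorb (II) and (III) into the right-hand side on the range $G>e^2$, which reduces to bounding the ratios $G^{-1/4}(\log G)^{7/16}$ and $G^{-1/(2p')}(\log G)^{1/2}$ against a constant. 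A direct derivative test shows the first is decreasing on $[e^{7/4},\infty)$ and hence bounded on $[e^2,\infty)$, while the second attains its maximum at $G=e^{p'}$ with value $\sqrt{p'/e}$; this absorbs into a constant that is allowed to depend on $p$, yielding \eqref{eq:logest}.

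The main obstacle is really the analysis of term (II): this is what forces the hypothesis $p>1$ (so that H\"older's inequality actually delivers algebraic decay in $r$ and not just a $o(1)$) and it is what dictates the threshold $G>e^2$ appearing in the statement.
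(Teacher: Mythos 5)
Your proof is correct and follows essentially the same route as the paper: Lemma \ref{lem:Nvare_estimate} combined with Schochet's estimate \eqref{eq:logrho_estimate} for the measure part and H\"older for the $L^p$ part, with the same choice $\varepsilon\sim\bigl(\|\nabla f\|_{L^2}(\log\|\nabla f\|_{L^2})^{1/4}\bigr)^{-1}$; the paper merely keeps $\alpha<1/2$ generic and absorbs your terms (II)--(III) with the one-line remark that the logarithmic factor dominates algebraic powers of $\varepsilon$. Your observation that absorbing (II) costs a factor $\sqrt{p'/e}$ is accurate and applies equally to the paper's own argument, so the constant in \eqref{eq:logest} is really $p$-dependent (degenerating as $p\to 1^+$) rather than universal.
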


\begin{proof}
We begin by estimating $\dsp{\int_{|z-y| < \rho} |f| \, dz}$. We have, using \eqref{eq:logrho_estimate}:
\begin{align*}
    \int_{|z-y| < \rho} |f| \, dz & \leq \int_{|z-y| < \rho} \mu \, dz + \int_{|z-y| < \rho} |w| \, dz \\
    & \leq \|\mu\|_{H^{-1}} |\log \rho|^{-1/2} + C \|w\|_{L^p} \rho^{2p/p-1}.
\end{align*}

    Let $0<\varepsilon < 1$ and choose $\cN=  1 / \sqrt{\varepsilon}$  in Lemma \ref{lem:Nvare_estimate}. Then \eqref{eq:Nvare_estimate} becomes
\begin{align} \label{eq:vare_estimate1}
    \|f\|_{L^2}^2 \leq & C\Biggl[\frac{1}{\varepsilon}\|f\|_{L^1} \left(\|\mu\|_{H^{-1}}|\log \varepsilon|^{-1/2} + \|w\|_{L^p} \varepsilon^{2\alpha p/(p-1)} + \|f\|_{L^1}\varepsilon^{-\alpha + 1/2} \right) \nonumber \\
    &  + \varepsilon \| \nabla f \|_{L^2}^2\Biggr].
\end{align}

Recall we chose $\alpha < 1/2$, so the power of $\varepsilon$ in the third term is positive. Clearly, the power of $\varepsilon$ in the second term is also positive. Therefore, if $0<\varepsilon<1$, then the logarithmic term dominates the algebraic powers of $\varepsilon$.

In summary, it follows from \eqref{eq:vare_estimate1} and
that, for all $0<\varepsilon<1$,
\begin{equation} \label{eq:vare_estimate}
    \|f\|_{L^2}^2 \leq C\left[\frac{1}{\varepsilon}\|f\|_{L^1}\left( \|\mu\|_{H^{-1}}| + \|w\|_{L^p}+\|f\|_{L^1}\right)|\log \varepsilon|^{-1/2} + \varepsilon
    \| \nabla f \|_{L^2}^2\right].
\end{equation}

Assume that
\begin{equation} \label{eq:largegradf}
\| \nabla f \|_{L^2} > e^2
\end{equation}
and choose
\begin{equation} \label{eq:choicevare}
    \varepsilon^\ast = \frac{1}{\| \nabla f \|_{L^2} \sqrt[4]{\log \| \nabla f \|_{L^2}^2}}.
\end{equation}
Clearly $0<\varepsilon^\ast < 1$.

It is easy to see that, using $\varepsilon = \varepsilon^\ast$ in \eqref{eq:vare_estimate}, we obtain the desired result.

\end{proof}

Proposition \ref{cor:Delortconcave} allows us to adapt Proposition \ref{prop:L1XiExist} to bounded Radon measures. The main difference between our next result and that of Proposition \ref{prop:L1XiExist} is that, since we have a precise estimate on the behavior of our functions in small disks, this leads to more precise control on the relative behavior of $\|\nabla f\|_{L^2}^2$ and $\|f\|_{L^2}^2$.

\begin{proposition} \label{prop:DelortThetaExist}
Let $\cF \subset H^1(\T^2)$ be a family of mean-free functions. Assume that, if $f \in \cF$ then $f = \mu + w$, with $\mu \in \cB \cM \cap H^{-1}$, $\mu \geq 0$, and $w \in L^p(\T^2)$, for some $p>1$.  Let $K>0$ and assume further that, for all $f \in \cF$, there exists a decomposition $f=\mu + w$ as above such that $\|\mu\|_{\mathcal{BM}} +\|\mu\|_{H^{-1}}+\|w\|_{L^p} \leq K$.

Then there exists $\Phi \in \cC^1$, concave, increasing, sublinear, $\Phi(x) = \mathcal{O} \left( x |\log x|^{-1/4} \right)$ for large $x$, such $\Upsilon = (\Phi^{-1})^2$ satisfies
\[\Upsilon\left( \|f\|_{L^2}^2 \right) \leq \|\nabla f \|_{L^2}^2, \text{ for all } f \in \cF.\]

\end{proposition}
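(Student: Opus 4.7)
The plan is to follow the architecture of the proof of Proposition \ref{prop:L1XiExist}, using Proposition \ref{cor:Delortconcave} in place of Proposition \ref{cor:L1concave}. The advantage over the abstract $L^1$ case is that the small-disk modulus is now an explicit logarithmic rate and no longer depends on the family. First I would observe that under the assumptions, $\|f\|_{L^1} \leq \|\mu\|_{\mathcal{BM}} + \|w\|_{L^1} \leq K + C_p \|w\|_{L^p} \leq CK$ by the continuous embedding $L^p(\T^2) \hookrightarrow L^1(\T^2)$. Substituting the uniform bounds $\|\mu\|_{H^{-1}}, \|w\|_{L^p}, \|f\|_{L^1} \leq CK$ into Proposition \ref{cor:Delortconcave} collapses its conclusion to
\[\|f\|_{L^2}^2 \leq C_K \, \frac{\|\nabla f\|_{L^2}}{\sqrt[4]{\log \|\nabla f\|_{L^2}}} \qquad \text{whenever } \|\nabla f\|_{L^2} > e^2,\]
with $C_K = C(K^2 + 1)$ and $C$ universal. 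For $\|\nabla f\|_{L^2} \leq e^2$, the mean-free Poincar\'e inequality gives $\|f\|_{L^2}^2 \leq C_P^2 \|\nabla f\|_{L^2}^2$.

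Next I would construct $\Phi$ as a primitive of a non-increasing, continuous, positive function. Specifically, I set
\[\psi(y) = \begin{cases} A_K \, 2^{-1/4} & \text{if } 0 \leq y \leq e^2, \\ A_K (\log y)^{-1/4} & \text{if } y > e^2, \end{cases}\]
which is continuous at $y = e^2$, and put $\Phi(x) = \int_0^x \psi(y) \, \dd y$. Choosing $A_K$ larger than both $2^{1/4} C_P^2 \, e^2$ (so that $\Phi(x) = A_K 2^{-1/4} x \geq C_P^2 x^2$ on $[0, e^2]$) and $C_K$ (so that $\Phi(x) \geq x \psi(x) = A_K x (\log x)^{-1/4}$ dominates the first displayed bound for $x > e^2$, using monotonicity of $\psi$), I obtain $\|f\|_{L^2}^2 \leq \Phi(\|\nabla f\|_{L^2})$ for every $f \in \cF$. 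By the fundamental theorem of calculus, $\Phi$ is $\cC^1$ (since $\psi$ is continuous and positive) and strictly increasing, concave (since $\psi$ is non-increasing), and sublinear (since $\psi(y) \to 0$); the tail estimate $\int_{\sqrt{x}}^{x} \psi \leq A_K 2^{1/4} x (\log x)^{-1/4}$ together with $\int_0^{\sqrt{x}} \psi \leq A_K 2^{-1/4} \sqrt{x}$ yields $\Phi(x) = \cO(x (\log x)^{-1/4})$ as $x \to \infty$.

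Finally, because $\Phi$ is $\cC^1$, strictly increasing with $\Phi' > 0$, concave and sublinear, its inverse $\Phi^{-1}$ is $\cC^1$, strictly increasing, convex and superlinear, and $\Upsilon = (\Phi^{-1})^2$ satisfies $\Upsilon(\|f\|_{L^2}^2) \leq \|\nabla f\|_{L^2}^2$ upon inverting $\|f\|_{L^2}^2 \leq \Phi(\|\nabla f\|_{L^2})$. The main obstacle I anticipate is purely bookkeeping: arranging a single $\Phi$ that simultaneously dominates the quadratic small-gradient regime coming from Poincar\'e and the logarithmically sublinear large-gradient regime from Proposition \ref{cor:Delortconcave}, and verifying that the join at $y = e^2$ inherits the required $\cC^1$ regularity. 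Beyond this, the argument is a direct adaptation of the proof of Proposition \ref{prop:L1XiExist}, with the abstract modulus $\sqrt{\bar\eta(\pi y^{-1/4})}$ replaced by the explicit modulus $(\log y)^{-1/4}$ coming from the small-disk mass bound \eqref{eq:logrho_estimate}.
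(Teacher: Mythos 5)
Your proof is correct and follows essentially the same route as the paper's: it feeds the uniform bounds into Proposition \ref{cor:Delortconcave} to get the logarithmic estimate for $\|\nabla f\|_{L^2}>e^2$, uses the mean-free Poincar\'e inequality for small gradients, builds a single $\cC^1$, concave, increasing, sublinear majorant $\Phi$ with the required $\cO\left(x|\log x|^{-1/4}\right)$ growth, and then inverts and squares. The only (cosmetic) difference is in the construction of $\Phi$: the paper defines $\Phi_1(x)=Cx(\log x)^{-1/4}$ for $x>e^2$, extends it to $[0,e^2]$ and dominates the Poincar\'e parabola via a concavity comparison, whereas you realize $\Phi$ as the primitive of an explicit non-increasing continuous density $\psi$, which makes the $\cC^1$ join at $e^2$, the concavity, and the monotonicity automatic.
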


\begin{proof}

It follows from \eqref{eq:logest} that, whenever $\|\nabla f\|_{L^2} > e^2$, we have
\begin{equation} \label{eq:largenablawest}
    \|f\|_{L^2}^2 \leq C \frac{\|\nabla f\|_{L^2}}{\sqrt[4]{\log \|\nabla f\|_{L^2}}},
\end{equation}
where $C=C (K)$.

We introduce
\[ \Phi_1 = \Phi_1 (x) \equiv C \frac{x}{\sqrt[4]{\log x}}, \quad \text{ for } x > e^2.\]

It follows from an elementary calculation that, if $x>e^2$, then $ \Phi_1^\prime >0$ and $ \Phi_1^{\prime\prime} < 0$, so that $ \Phi_1 $ is increasing and concave in this interval. In addition,  $\lim_{x \to + \infty}  \Phi_1^\prime (x) = 0$.

We extend $\Phi_1$ to $[0, e^2]$ so that the extension, still called $\Phi_1$, belongs to $\cC^1$ and is concave and increasing on all of $[0,+\infty)$.

Let, also,
\[\Phi_2=\Phi_2(x) = C_Px^2,\]
with $C_P$ the constant from the Poincar\'e inequality for mean-free functions.

Choose $L>0$ such that $\Phi_2(e^2) = L \Phi_1(e^2)$ and let $\varphi(x) \equiv L\Phi_1(x) - \Phi_2(x)$. Then $\varphi(0) = 0=\varphi(e^2)$ and $\varphi$ is concave. It follows that $\varphi (x) \geq 0$ for $x \in [0,e^2]$, i.e. $\Phi_2(x) \leq L\Phi_1(x)$ on $[0,e^2]$.

Let $\Phi=\Phi(x) = (L+1)\Phi_1(x)$. Clearly $\Phi\in \cC^1$, $\Phi$ is concave, increasing, sublinear, $\Phi(x) = 
\mathcal{O} \left( x |\log x|^{-1/4} \right)$ for large $x$, as desired.

Next, in view of \eqref{eq:largenablawest}, the definition of $\Phi$, and using the Poincar\'e inequality for mean-free functions together with the inequality $\Phi_2 \leq \Phi$ on $[0,e^2]$,  we obtain
\[\|f\|_{L^2}^2 \leq  \Phi(\|\nabla f\|_{L^2}), \text{ for all } f \in \cF.\] 

It is immediate that the inverse function, $\Phi^{-1}$, belongs to $\cC^1$ and that it is increasing, convex and superlinear.

Set
\begin{equation} \label{eq:ThetaDef}
    \Upsilon =\Upsilon(x) = (\Phi^{-1})^2(x).
\end{equation}
Then  $\Upsilon$ is $\cC^1$, convex, increasing, superquadratic, and satisfies the desired estimate.

This concludes the proof.

    \end{proof}

\section{No anomalous dissipation}

In this section we finally turn our attention to the incompressible flow equations, namely, the Navier-Stokes system \eqref{eq:NS} and the Euler system below:

\begin{equation} \label{eq:Euler}
\left\{\begin{array}{ll}
\partial_t u + (u \cdot \nabla) u = -\nabla p + F, \quad & \text{ in } \T^2 \times (0,T) \\
\dv u = 0, \quad & \text{ in } \T^2 \times [0,T) \\
u(0,\cdot) = u_0, \quad & \text{ in } \T^2.
\end{array}
\right.
\end{equation}

%\vspace{0.5cm}

We will apply Proposition \ref{prop:L1XiExist} to show that, under certain assumptions, there is no anomalous dissipation for a family of weak solutions of the 2D incompressible Navier-Stokes equations \eqref{eq:NS} whose weak limit is a weak solution of the Euler equations. 

%\begin{enumerate}[label={\bf H\arabic*}, ref=
%\textcolor{black}{\bf H\arabic*},nosep]
%    \item \label{i:H1} $u_0\in L^2(\T^2)$, $\dv u_0 = 0$ and $\curl u_0 \equiv \omega_0 \in L^1(\T^2)$  or
%    \item \label{i:H2} $u_0\in L^2(\T^2)$, $\dv u_0 = 0$ and $\curl u_0 \equiv \omega_0  = \mu_0 + w_0$, with $\mu_0 \in \cB\cM(\T^2)$, %$\mu_0 \geq 0$, and $w_0 \in L^p(\T^2)$ for some $p>1$.
%\end{enumerate}

Let us begin by recalling the definition of a weak solution of \eqref{eq:Euler}.
\begin{definition} \label{def:weaksol2DEuler}
  Fix $T>0$, let $u_0 \in L^2 (\T^2)$ be a divergence-free vector field. Let $F\in
  L^1 (0,T;L^2 (\T^2))$ and assume that $\dv F(\cdot,t) = 0$ a.e. $t\in (0,T)$. A vector field $u\in L^\infty (0,T;L^2 (\T^2))$ is said to be a \em{weak solution} of \eqref{eq:Euler}, if the following conditions are satisfied:
    \begin{enumerate}
      \item for all divergence-free test vector fields $\Phi\in C_c^\infty(\T^2 \times [0,T))$, we have
\begin{multline}
\int_0^T
\int_{\T^2} \left\{ u\cdot \partial_t \Phi
+ u \cdot D \Phi u \right\}  \dd x \dd t
+ \int_{\T^2}  u_0(x) \cdot \Phi(x,0) \dd x \\
= \int_0^T \int_{\T^2} F \cdot \Phi \, dx \, dt;
\end{multline}
\item $\dv  u(\cdot,t) = 0$ holds in the sense of distributions, a.e. $t\in [0,T]$.
  \end{enumerate}
\end{definition}

The problem of existence of weak solutions with vortex sheet initial data, i.e. $u_0 \in L^2$, $\dv u_0=0$,  $\curl u_0=\omega_0 \in \cB\cM$,  was first addressed in pioneering work by R. DiPerna and A. Majda,  see \cites{DiPernaMajda87a,DiPernaMajda87b,DiPernaMajda88}, and, in this generality, it remains an open problem.
If $\omega_0 \in L^1 \cap H^{-1}$ then the existence of a weak solution was established in \cite{VecchiWu93}. If the singular part of $\omega_0$ belongs to $\mathcal{BM} \cap H^{-1}$ and has a distinguished sign then existence of a weak solution with this initial data is due to J.-M. Delort, see \cite{Delort1991}. Uniqueness of weak solutions has not been proved for such irregular initial data and it is unlikely to hold. It is particularly worthy of note that nonuniqueness was recently proved in the setting $\omega  \in L^\infty_t L^p_x$, for some $p>1$ but close to $1$, see \cite{BCK2024}. 

It is natural to concentrate on those weak solutions which arise as vanishing viscosity limits of solutions of the Navier-Stokes equations. These are called physically realizable weak solutions, originally introduced in \cite{CLNS2016}*{Definition 3} for unforced flows, see also \cite{LMPP2021a}*{Definition 2.2}, and in \cite{LN2022}*{Definition 2.5} for flows with forcing.

%\begin{definition} \label{def:physrealwsol2DEuler}
%Let $T>0$ and $F \in L^1( 0,T  ;L^2(\T^2))$. Assume that $\dv F(\cdot,t)=0$ a.e. $t\in (0,T)$.
%We say that $u \in L^\infty(0,T;L^2(\T^2))$  is a {\em physically realizable weak solution of the incompressible 2D Euler equations with external forcing} $F$, if the following conditions hold true:
%\begin{enumerate}
%\item[(1)] $u$ is a weak solution of the Euler equations in the sense of Definition \ref{def:weaksol2DEuler};
%\item[(2)] there exists a family of solutions of the incompressible 2D Navier-Stokes equations with viscosity $\nu >0$, $\{u^{\nu}\}$, with forcing $F^\nu \in L^1((0,T);L^2(\torus^2))$, such that, as $\nu \to 0$,
%    \begin{itemize}
%    \item $u^{\nu}(\cdot,0)\equiv u_0^{\nu} \to u_0 \equiv  $ strongly in $L^2(\T^2)$;
%    \item $F^\nu \rightharpoonup F$ weak--$L^1(0,T;L^2(\T^2))$;
%    \item $u^{\nu} \rightharpoonup u$ weak$\ast$--$L^{\infty}(0,T;L^2(\T^2))$.
%    \end{itemize}
%\end{enumerate}
%Any family $\{u^\nu\}$ satisfying (2) is called a  {\em physical realization} of $u$.
%\end{definition}

Anomalous dissipation refers to the vanishing viscosity approximation of a physically realizable weak solution. Our first theorem applies to such a family of approximations, called physical realizations of a given physically realizable weak solution.

\begin{theorem} \label{t:main} Fix $T>0$. 
    Let $\{u_0^\nu\}_{\nu > 0}$ be a family of divergence-free vector fields in $L^2(\T^2)$. Let $u^\nu \in L^\infty(0,T; L^2(\T^2)) \cap L^2(0,T; H^1(\T^2))$ be the solution to $2D$ Navier-Stokes with viscosity $\nu$, with initial data $u_0^\nu$, and forcing $F^\nu \in L^2(0,T; L^2(\T^2))$. Set $\omega^\nu \equiv \curl u^\nu$. Assume the following conditions:

    \begin{enumerate}[label={{\rm {\bf H(\alph*)}}},
ref=\textcolor{black}{{\rm {\bf H(\alph*)}}},nosep]
        \item[]
        \item \label{i:Hi} $u_0^{\nu} \to u_0$ strongly in $L^2(\T^2)$;
        \item[]
        \item \label{i:Hii} $F^\nu \rightharpoonup F$ weakly in $L^2(0,T; L^2(\T^2))$;
        \item[]
        \item \label{i:Hiii} $\omega^\nu$ is bounded in $L^\infty(0,T; L^1(\T^2))$.
        \item[]
    \end{enumerate}

If additionally 
\begin{equation} \label{e:noDiracs}
   \lim_{r\to 0^+} \,\, \sup_{\nu >0} \, \sup_{t\in (0,T)} \, \sup_{z \in \T^2} \,\, \int_{\{|x-z|<r\} } |\omega^\nu| \, \mathrm{d}x = 0,
\end{equation}
    then, passing to subsequences as needed, $u^\nu$ converges, weak-$\ast$ $L^\infty(0,T; L^2(\T^2))$, to  a (physically realizable) weak solution of $2D$ Euler with initial data $u_0$ and 
    \begin{equation*}
        \limsup_{\nu \to 0^+} \nu \int_0^T \|\omega^\nu(\cdot, t)\|_{L^2}^2 \, \mathrm{d}t = 0.
    \end{equation*}
\end{theorem}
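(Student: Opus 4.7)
The plan is to combine weak compactness of the Navier--Stokes family, a Delort-type concentration-compactness argument for the nonlinear term, and the refined Nash inequality of Proposition~\ref{prop:L1XiExist}.

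\emph{Compactness and passage to the Euler limit.} The viscous energy identity~\eqref{eq:viscousenergy}, combined with \ref{i:Hi}, \ref{i:Hii}, and Cauchy--Schwarz, gives that $u^\nu$ is uniformly bounded in $L^\infty(0,T;L^2(\T^2))$, $\sqrt{\nu}\,\nabla u^\nu$ is uniformly bounded in $L^2(0,T;L^2(\T^2))$, and in particular $\nu\int_0^T\|\omega^\nu\|_{L^2}^2\,\dd t$ is a priori bounded. Banach--Alaoglu yields a subsequence and a weak-$\ast$ limit $u\in L^\infty(0,T;L^2(\T^2))$. I would identify $u$ as a weak Euler solution with initial velocity $u_0$ and forcing $F$ by testing \eqref{eq:NS} against any smooth divergence-free $\Phi\in C_c^\infty(\T^2\times[0,T))$: linear terms pass by weak-$\ast$ convergence and \ref{i:Hii}; the viscous contribution vanishes since $\nu\int_0^T\int_{\T^2} u^\nu\cdot\Delta\Phi\,\dd x\,\dd t\to 0$; the initial data passes by \ref{i:Hi}. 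The only delicate term is the nonlinear one, which I handle via the Delort--Schochet concentration-compactness argument: \ref{i:Hiii} together with the uniform non-concentration~\eqref{e:noDiracs} rules out Dirac-type concentrations in the weak limit of $u^\nu\otimes u^\nu$, identifying it with $u\otimes u$ in $\cD'$.

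\emph{Refined Nash and vanishing dissipation.} I apply Proposition~\ref{prop:L1XiExist} to the family $\cF=\{\omega^\nu(\cdot,t)\,:\,\nu>0,\ t\in(0,T)\}$. These are automatically mean-free on $\T^2$ (being curls); condition~(A) of the proposition is exactly \ref{i:Hiii}, and condition~(B) is precisely~\eqref{e:noDiracs}. For each $\nu>0$, parabolic regularization places $\omega^\nu(\cdot,t)\in H^1(\T^2)$ for a.e.~$t$, so the family sits in $H^1$ as required. The proposition then yields $\Upsilon\in C^1$, convex, increasing, superquadratic, with
\[
\Upsilon\bigl(\|\omega^\nu(\cdot,t)\|_{L^2}^2\bigr)\le\|\nabla\omega^\nu(\cdot,t)\|_{L^2}^2\qquad\text{for a.e.\ }t\in(0,T),\ \nu>0.
\]

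\emph{Main obstacle.} The hard step is converting this pointwise-in-time inequality into the time-integrated conclusion $\nu\int_0^T\|\omega^\nu\|_{L^2}^2\,\dd t\to 0$. My intended route is to use the non-concentration condition together with the weak formulation of Euler to show that the limit $u$ satisfies the Euler energy equality (in the spirit of \cite{CLNS2016,LN2022}); lower-semicontinuity of the $L^2$ norm under weak-$\ast$ convergence, combined with this upper bound coming from Euler energy balance, then forces $\|u^\nu(T)\|_{L^2}^2\to\|u(T)\|_{L^2}^2$, hence strong $L^2$ convergence of $u^\nu$, which in turn passes the forcing pairing $\int_0^T\int_{\T^2} F^\nu\cdot u^\nu\,\dd x\,\dd t\to \int_0^T\int_{\T^2} F\cdot u\,\dd x\,\dd t$. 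Subtracting the Navier--Stokes and Euler energy identities then yields the desired vanishing of the dissipation. The superquadraticity of $\Upsilon$ is the quantitative enhancement of the classical Nash inequality that makes the energy conservation argument close in this rough setting.
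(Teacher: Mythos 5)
Your setup (compactness, identification of the weak-$\ast$ limit as a weak Euler solution via the Delort--Schochet non-concentration argument, and the application of Proposition~\ref{prop:L1XiExist} to the family $\cF=\{\omega^\nu(\cdot,t)\}$) matches the paper. But the step you yourself flag as the ``main obstacle'' is where the proposal breaks down, and the route you sketch for it cannot work. You propose to first prove that the limit $u$ satisfies the Euler energy \emph{equality}, deduce strong $L^2$ convergence of $u^\nu$, and then subtract energy identities to kill the dissipation. This is circular in the wrong direction: under the hypotheses of the theorem, energy balance for the limiting Euler solution is \emph{not} known --- the paper explicitly recalls (citing \cite{JLLN2024}) that such weak limits can a priori carry an energy defect, and that strong convergence of $u^\nu$ is \emph{equivalent} to energy balance of the limit and is \emph{strictly stronger} than absence of anomalous dissipation; it remains an open problem in this setting (see the paper's concluding section). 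So the implication chain you need (non-concentration $\Rightarrow$ energy equality of the limit $\Rightarrow$ strong convergence $\Rightarrow$ no anomalous dissipation) has an unproved, and possibly false, first arrow. Moreover, in your sketch the superquadraticity of $\Upsilon$ is never actually used in a computation; it is invoked only as a slogan.

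The idea you are missing is quantitative and does not pass through the limit equation at all. Fix $\delta>0$. The enstrophy balance for the vorticity equation, combined with the a priori bound $\|\omega^\nu(\cdot,\delta)\|_{L^2}^2\le C/(\delta\nu)$ and the boundedness of the dissipation, gives
\begin{equation*}
\nu^2\int_\delta^T\|\nabla\omega^\nu(\cdot,\tau)\|_{L^2}^2\,\dd\tau\le K+\frac{C}{\delta}.
\end{equation*}
Inserting $\Upsilon(\|\omega^\nu\|_{L^2}^2)\le\|\nabla\omega^\nu\|_{L^2}^2$ and applying Jensen's inequality (this is where convexity of $\Upsilon$ enters) yields
\begin{equation*}
\nu^2(T-\delta)\,\Upsilon\!\left(\frac{\zeta^\nu_\delta(T)}{\nu(T-\delta)}\right)\le K+\frac{C}{\delta},
\end{equation*}
and since $\Upsilon=(\Phi^{-1})^2$ with $\Phi$ increasing, concave and \emph{sublinear}, inverting gives $\zeta^\nu_\delta(T)\le\sqrt{M(T-\delta)}\,\Phi(x_\nu)/x_\nu\to 0$ with $x_\nu=\sqrt{M}/(\nu\sqrt{T-\delta})\to\infty$; this is exactly where superquadraticity pays off, and it also produces the rate. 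The remaining piece $\nu\int_0^\delta\|\omega^\nu\|_{L^2}^2\,\dd\tau$ is controlled directly from the viscous energy identity using \ref{i:Hi}, weak lower semicontinuity of $\|u^\nu(\delta)\|_{L^2}$, a $C\sqrt{\delta}$ bound on the forcing term, and the right-continuity of $t\mapsto\|u(t)\|_{L^2}$ at $t=0$ established in \cite{JLLN2024}; it is then made small by letting $\delta\to 0^+$. No energy equality for the limit, and no strong convergence, is ever needed.
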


\begin{proof}
  
We begin by noting that, from hypotheses \ref{i:Hi}, \ref{i:Hii} and \ref{i:Hiii}, we have:
\[ \|u_0^\nu\|_{L^2} + \|\omega^\nu\|_{L^\infty_t L^1_x} + \|F^\nu\|_{L^2_tL^2_x} \leq K,\]
for some $K>0$ and for all $\nu>0$.
 
Recall the energy balance identity for $u^\nu$:
    \begin{equation} \label{eq:energid}
        \frac{1}{2}\|u^\nu(t)\|_{L^2}^2 = \frac{1}{2}\|u^\nu_0\|_{L^2}^2 - \nu \int_0^t \|\omega^\nu(\tau)\|_{L^2}^2 \dd \tau + \int_0^t \langle F^\nu, u^\nu \rangle_{L^2} \dd \tau ,
    \end{equation}
    
Therefore, from \eqref{eq:energid}, we find
\begin{equation} \label{eq:energest}
    \|u^\nu(\cdot,t)\|_{L^2} \leq C, \text{ for } t \in [0,T],
\end{equation}
for some $C=C(K,T)>0$, see also \cite{JLLN2024}*{Lemma 3.1}.

 It follows from \eqref{eq:energid} and \eqref{eq:energest} that the dissipation term is bounded:
 \begin{equation} \label{eq:bdddissip}
     \nu\int_0^t \|\omega^\nu(\cdot,\tau)\|_{L^2}^2 \dd \tau \leq C,
 \end{equation}
for some $C=C(K,T)>0$ and for all $t\in [0,T]$, see also \cite{JLLN2024}*{(3.6)}.

The evolution of $\omega^\nu$ is governed by the vorticity formulation of the Navier-Stokes equations \eqref{eq:NS} on the torus, given by
\begin{equation} \label{eq:vorticityNS}
    \left\{\begin{array}{ll}
\partial_t \omega^\nu + (u^\nu \cdot \nabla) \omega^\nu = \nu\Delta \omega^\nu + \curl F^\nu, \quad & \text{ in } \T^2 \times (0,T) \\
\dv u^\nu = 0, \, \curl u^\nu = \omega^\nu, \quad & \text{ in } \T^2 \times [0,T) \\
\omega^\nu(0,\cdot) = \omega^\nu_0, \quad & \text{ in } \T^2.
\end{array}
\right.
\end{equation}

Let $r>0$. Energy methods for the vorticity equation \eqref{eq:vorticityNS} provide the following estimate:
\begin{equation} \label{eq:enstrenest}
    \|\omega^\nu(\cdot,t)\|_{L^2}^2 \leq  \|\omega^\nu(\cdot,r)\|_{L^2}^2 \, -
    \nu \int_r^t \|\nabla \omega^\nu(\cdot,\tau)\|_{L^2}^2 \dd \tau \, + \, \frac{1}{\nu} \int_r^t \|F^\nu(\cdot,\tau)\|_{L^2}^2 \dd \tau,
\end{equation}
for any $t\geq r$, see for example \cite{JLLN2024}*{(3.14)}. It follows from \eqref{eq:enstrenest} together with \eqref{eq:bdddissip} that
\begin{equation} \label{eq:enstrest}
    \|\omega^\nu(\cdot,t)\|_{L^2}^2 \leq \frac{C}{t\nu},
\end{equation}
for all $t>0$, see also \cite{JLLN2024}*{Lemma 3.7}.

Define:
\begin{equation} \label{e:zetanu}
\zeta^\nu = \zeta^\nu(t) = \nu \int_0^t \|\omega^\nu(\tau)\|_{L^2}^2 \dd \tau.
\end{equation}

Let
\[\cF = \{\omega^\nu(\cdot,t), \;\; 0<t<T, \,\nu>0\}.\]
Note that, in view of \eqref{eq:enstrest}, $\cF \subset H^1(\T^2)$ and, furthermore, since $\omega^\nu = \curl u^\nu$, it follows that $\omega^\nu$ is mean-free. By hypothesis \ref{i:Hiii} and assumption \eqref{e:noDiracs} this family satisfies conditions \ref{i:A} and \ref{i:B}. Thus we can use Proposition \ref{prop:L1XiExist} to obtain a convex, increasing and superquadratic function $\Upsilon$ such that
\begin{equation} \label{eq:Upsilonenstrophy}
    \Upsilon(\|\omega^\nu(\cdot,t)\|_{L^2}^2) \leq \|\nabla\omega^\nu(\cdot,t)\|_{L^2}^2.
\end{equation}

Fix $\delta>0$ and introduce
\[\zeta^\nu_\delta (t) = \nu \int_\delta^t \|\omega^\nu(\cdot,\tau)\|_{L^2}^2 \dd \tau .\]

From \eqref{eq:enstrenest} and \eqref{eq:enstrest} we obtain
\begin{equation} \label{eq:enstrestNEW}
     \nu \|\omega^\nu(\cdot,t)\|_{L^2}^2 \leq  \frac{C}{\delta} -
    \nu^2 \int_\delta^t \|\nabla \omega^\nu(\cdot,\tau)\|_{L^2}^2 \dd \tau + K.
\end{equation}

Now, since the left-hand-side of \eqref{eq:enstrestNEW} is nonnegative, it follows that
\[\nu^2 \int_\delta^t \|\nabla \omega^\nu(\cdot,\tau)\|_{L^2}^2 \dd \tau \leq K + \frac{C}{\delta}.\]

Using \eqref{eq:Upsilonenstrophy} we find
\[\nu^2 \int_\delta^t \Upsilon(\|\omega^\nu(\cdot,\tau)\|_{L^2}^2) \dd \tau \leq K + \frac{C}{\delta}.\]

Recall $\Upsilon$ is convex. Then Jensen's inequality yields
\[\nu^2 (t-\delta) \Upsilon \left( \frac{1}{t-\delta} \int_\delta^t \|\omega^\nu(\cdot,\tau)\|_{L^2}^2  \dd \tau \right) \leq K + \frac{C}{\delta}.\]

Therefore we have
\begin{equation} \label{eq:UpsilonenstrophyNEW}
    \nu^2 (t-\delta) \Upsilon \left(  \frac{\zeta^\nu_\delta (t)}{\nu(t-\delta)} \right) \leq K + \frac{C}{\delta},
\end{equation}

which we re-write as
\begin{equation} \label{eq:L1Upsilonrate}
\Upsilon\left( \frac{\zeta^\nu_\delta}{\nu(t-\delta)}\right) \leq \frac{K+ C/\delta}{\nu^2(t-\delta)}.
\end{equation}

For convenience let $M= K+C/\delta$.

Recall that, by construction, $\Upsilon=(\Phi^{-1})^2,$ where $\Phi$ is a concave, increasing, sublinear function. Therefore \eqref{eq:L1Upsilonrate} yields
\[\Phi^{-1}\left( \frac{\zeta^\nu_\delta}{\nu(t-\delta)}\right) \leq \frac{\sqrt{M}}{\nu \sqrt{t-\delta}}.\]
Thus, since $\Phi$ is increasing, we obtain
\[\frac{\zeta^\nu_\delta(t)}{\nu(t-\delta)}  \leq \Phi \left(\frac{\sqrt{M}}{\nu \sqrt{t-\delta}}\right).\]
Hence
\[\zeta^\nu_\delta (t)\leq \nu (t-\delta)\,
\Phi \left(\frac{\sqrt{M}}{\nu \sqrt{t-\delta}}\right) \]
\[=\sqrt{M}\sqrt{t-\delta} \;  \frac{\Phi \left(\sqrt{M}/\nu \sqrt{t-\delta}\right)}{\sqrt{M}/\nu \sqrt{t-\delta}}.\]

Let $x_\nu = \dsp{\frac{\sqrt{M}}{\nu \sqrt{T-\delta}}}$. We have shown that
\begin{equation} \label{eq:L1rate}
    \zeta^\nu_\delta (T)\leq
\sqrt{M}\sqrt{T-\delta} \;\;  \frac{\Phi \left(x_\nu\right)}{x_\nu}.
\end{equation}

Since $x_\nu \to +\infty \text{ as } \nu \to 0^+,$
 it follows from the sublinearity of $\Phi$ that the right-hand-side above vanishes as $\nu \to 0^+$.

We have established, therefore, that, for every $\delta>0$,
\begin{equation} \label{eq:zetanudeltavan}
\limsup_{\nu \to 0^+} \zeta^\nu_\delta (T) = 0.
\end{equation}

To conclude the proof  it remains to examine
\[\nu \int_0^\delta \|\omega^\nu(\cdot,\tau) \|_{L^2}^2 \dd \tau.\]

Recall that, from \eqref{eq:energest}, $u^\nu$ is bounded in $L^\infty(0,T;L^2(\T^2))$. Passing to subsequences as needed it follows that $u^\nu$ converges  weakly-$\ast$ in $L^\infty(0,T;L^2(\T^2))$. That its weak-$\ast$ limit $u$ is a weak solution of the $2D$ Euler equations follows by a simple adaptation of the proof of \cite{Delort1991}*{Th\'eor\`eme 2.1.2}, see also \cite{Schochet1995}*{Theorem 3.3} and \cite{ILN2020}*{Proposition 2.1}, using the key assumption \eqref{e:noDiracs}.

Rewrite the energy balance equation \eqref{eq:energid}, this time from $0$ to $\delta$, as:
\[\nu \int_0^\delta \|\omega^\nu(\cdot,\tau)\|_{L^2}^2  \dd \tau = \frac{1}{2}\|u_0^\nu\|_{L^2}^2 - \frac{1}{2}\|u^\nu(\delta)\|_{L^2}^2 + \int_0^\delta \langle F^\nu , u^\nu \rangle_{L^2} \dd \tau.\]
We wish to take the $\limsup_{\nu \to 0^+}$ on both sides of this equation. To this end first recall that, since $u^\nu \rightharpoonup u$ weak-$\ast$ in $L^\infty(0,T;L^2(\T^2))$ we have, by weak lower semicontinuity, that
\[\|u(\delta)\|_{L^2}^2 \leq \liminf_{\nu \to 0^+} \|u^\nu (\delta)\|_{L^2}^2.\]
In addition,
\[\left|\int_0^\delta \langle F^\nu , u^\nu \rangle_{L^2} \dd \tau \right| \leq \|F^\nu\|_{L^2_tL^2_x}\sqrt{\delta}\|u^\nu\|_{L^\infty_tL^2_x}.\]
Therefore, recalling hypothesis \ref{i:Hi}, that $u_0^\nu \to u_0$ strongly in $L^2(\T^2)$, we obtain
\begin{equation} \label{eq:dissipestinitialtime}
\limsup_{\nu\to 0^+} \nu \int_0^\delta \|\omega^\nu(\tau)\|_{L^2}^2  \dd \tau \leq \frac{1}{2}\left(\|u_0\|_{L^2}^2 - \|u(\delta)\|_{L^2}^2\right) + C \sqrt{\delta}.
\end{equation}

Now, it was established in \cite{JLLN2024}*{Lemma 3.5} that, under the current hypotheses, the inviscid solution is right-continuous in $L^2$ at $t=0$, that is,
\[\lim_{t \to 0^+} \|u(t)-u_0\|_{L^2}^2 = 0,\]
Hence the right-hand-side of \eqref{eq:dissipestinitialtime} is vanishingly small as $\delta \to 0^+$.

Putting together \eqref{eq:zetanudeltavan} and \eqref{eq:dissipestinitialtime} and recalling the definition of $\zeta^\nu$, \eqref{e:zetanu}, we obtain
\begin{align*}
\limsup_{\nu \to 0^+} \zeta^\nu(T) & \leq \limsup_{\nu \to 0^+} \zeta^\nu_\delta (T) + \limsup_{\nu \to 0^+} \nu \int_0^\delta \|\omega^\nu(\tau)\|_{L^2}^2  \dd \tau \\
& \leq  \frac{1}{2}\left(\|u_0\|_{L^2}^2 - \|u(\delta)\|_{L^2}^2\right) + C \sqrt{\delta}.
\end{align*}
Above $\delta>0$ is arbitrary and the left-hand-side above vanishes as $\delta \to 0^+$. This establishes that
\[\limsup_{\nu \to 0^+} \zeta^\nu(T) = 0.\]

This concludes the proof.

\end{proof}

 \section{No anomalous dissipation for vortex sheets}

In this section we will use Theorem \ref{t:main} to show there is no anomalous dissipation for physical realizations of weak solutions with vorticities of vortex sheet regularity. We note that, while Theorem \ref{t:main} requires assumptions on the viscous solutions, namely \ref{i:Hiii} and \eqref{e:noDiracs}, for the results in the present section we place assumptions only on the data.

Fix $T>0$ and let $u^\nu \in L^\infty(0,T; L^2(\T^2)) \cap L^2(0,T; H^1(\T^2))$ be the solution to the $2D$ Navier-Stokes equations \eqref{eq:NS} with divergence-free initial data $u_0^\nu \in L^2(\T^2)$ and forcing $F^\nu \in L^2(0,T; L^2(\T^2))$. Assume hypotheses \ref{i:Hi} and \ref{i:Hii} hold true. Let $\omega_0^\nu = \curl u_0^\nu$.

%\LopesComment{The proof of the theorem below is far from complete. We need to deal with the fact that $\omega^\nu$ has mean zero, so we cannot have non-trivial $\omega^\nu \geq 0$.}

\begin{theorem} \label{thm:mainL1}
Let $\omega^\nu\equiv \curl u^\nu$. Suppose that
%\vspace{.5cm}

\begin{enumerate}[label={{\rm {\bf H1(\alph*)}}},
ref=\textcolor{black}{{\rm {\bf H1(\alph*)}}},nosep]
    \item \label{i:H1i}
    $\{\omega_0^\nu\}_{\nu>0}$ is weakly compact in $L^1(\T^2)$;
    \item[]
    \item \label{i:H1ii} for almost every $\tau\in (0,T)$, $\{\curl F^\nu (\cdot,\tau)\}_{\nu>0}$ is weakly compact in $L^1(\T^2)$ and
    \[\int_0^T \sup_{\nu>0} \|\curl F^\nu(\cdot,\tau)\|_{L^1} \dd \tau < \infty.\]
    \end{enumerate}

%\vspace{.5cm}

Then we have:
\[\limsup_{\nu \to 0^+} \nu \int_0^T \|\omega^\nu (\tau)\|_{L^2}^2 \dd \tau = 0.\]
\end{theorem}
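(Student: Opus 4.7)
The plan is to verify the two unproven hypotheses of Theorem~\ref{t:main}, namely the uniform $L^\infty_t L^1_x$ bound \ref{i:Hiii} on $\omega^\nu$ and the no-concentration condition \eqref{e:noDiracs}; the desired conclusion then follows immediately from Theorem~\ref{t:main}.

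For \ref{i:Hiii}, I would apply the standard renormalization to the vorticity equation \eqref{eq:vorticityNS}: multiply by a smooth convex approximation of $|\cdot|$, integrate in $x$ (the convective term vanishes by $\dv u^\nu=0$ and the viscous term has a favorable sign), and pass to the limit. This gives
\[\|\omega^\nu(\cdot,t)\|_{L^1}\leq \|\omega_0^\nu\|_{L^1}+\int_0^t\|\curl F^\nu(\cdot,\tau)\|_{L^1}\dd \tau.\]
Weak $L^1$ compactness of the initial vorticities (hypothesis \ref{i:H1i}) forces $L^1$ boundedness of the first term, and \ref{i:H1ii} bounds the second uniformly in $\nu$ and $t\leq T$.

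The substantive step is \eqref{e:noDiracs}, for which I would propagate uniform integrability of $\{\omega^\nu(\cdot,t)\}_{\nu,t}$ by truncation. Applying the renormalization procedure with the convex function $\beta_L(r)=(|r|-L)_+$ (so $|\beta_L'|\leq 1$ and $\beta_L'\equiv 0$ on $\{|r|\leq L\}$) produces
\[\int_{\T^2}(|\omega^\nu(\cdot,t)|-L)_+\dd x\leq \int_{\T^2}(|\omega_0^\nu|-L)_+\dd x+\int_0^t\!\int_{\{|\omega^\nu|>L\}}|\curl F^\nu|\dd x\dd \tau.\]
By Dunford--Pettis and \ref{i:H1i}, $\sup_\nu\int (|\omega_0^\nu|-L)_+\dd x\to 0$ as $L\to\infty$. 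For the forcing integral, Chebyshev combined with the $L^1$ bound from the previous step yields $\sup_{\nu,\tau}|\{|\omega^\nu(\cdot,\tau)|>L\}|\leq M/L\to 0$; together with the pointwise-in-$\tau$ uniform integrability of $\{\curl F^\nu(\cdot,\tau)\}_\nu$ (consequence of \ref{i:H1ii}), this gives, for a.e.\ $\tau$,
\[\sup_\nu\int_{\{|\omega^\nu(\cdot,\tau)|>L\}}|\curl F^\nu(\cdot,\tau)|\dd x\underset{L\to\infty}{\longrightarrow}0.\]
The integrable envelope $g(\tau)=\sup_\nu\|\curl F^\nu(\cdot,\tau)\|_{L^1}$ supplied by \ref{i:H1ii} then allows dominated convergence in $\tau$. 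Combining, $\sup_{\nu,t}\int(|\omega^\nu|-L)_+\dd x\to 0$, which via the elementary bound $\int_{|\omega^\nu|>2L}|\omega^\nu|\dd x\leq 2\int(|\omega^\nu|-L)_+\dd x$ amounts to uniform integrability of $\{\omega^\nu(\cdot,t)\}$. The splitting
\[\int_{|z-y|<r}|\omega^\nu(y)|\dd y\leq L\pi r^2+\int_{\{|\omega^\nu|>L\}}|\omega^\nu(y)|\dd y\]
then yields \eqref{e:noDiracs} by sending $r\to 0$ and then $L\to\infty$.

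The main difficulty lies in the forcing contribution to the truncation inequality: the truncation level needed to make $\int_{|\omega^\nu|>L}|\curl F^\nu|\dd x$ small depends on $\tau$ through the time-dependent modulus of uniform integrability, which is precisely why the integrable envelope in \ref{i:H1ii} is required, in order to close the argument via dominated convergence in $\tau$. The renormalization identities themselves are routine, since each $\omega^\nu$ is smooth for $t>0$ under the current assumptions.
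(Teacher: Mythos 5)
Your proposal is correct and follows the same skeleton as the paper's proof: reduce to verifying \ref{i:Hiii} and the no-concentration condition \eqref{e:noDiracs}, then invoke Theorem~\ref{t:main}. The $L^1$ bound is obtained identically. The one place where you diverge is the propagation of uniform integrability: the paper quotes the maximal-function estimate \eqref{eq:Msestimate} from \cite{JLLN2024}*{Proposition 4.5}, namely $\cM_s(\omega^\nu(t)) \leq \cM_s(\omega_0^\nu) + \int_0^T \cM_s(\curl F^\nu(\tau))\,\mathrm{d}\tau$, and concludes directly, whereas you re-derive the same information in a self-contained way by renormalizing with $\beta_L(r)=(|r|-L)_+$. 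The two are essentially interchangeable characterizations of uniform integrability, and both ultimately rest on the same ingredients: Dunford--Pettis applied to \ref{i:H1i} and to $\{\curl F^\nu(\cdot,\tau)\}_\nu$ for a.e.\ $\tau$, plus dominated convergence in $\tau$ using the integrable envelope $\sup_\nu\|\curl F^\nu(\cdot,\tau)\|_{L^1}$ from \ref{i:H1ii} (the paper needs this step too, implicitly, to pass from pointwise-in-$\tau$ decay of $\cM_s(\curl F^\nu(\tau))$ to decay of its time integral). Your version carries one extra wrinkle absent from the maximal-function route --- the superlevel set $\{|\omega^\nu(\tau)|>L\}$ in the forcing term depends on $\nu$ and $\tau$ --- but you handle it correctly via Chebyshev and the already-established uniform $L^1$ bound, since uniform integrability in the absolute-continuity form is insensitive to which small set one integrates over. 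What your approach buys is independence from the external citation; what the paper's buys is brevity. Either way the argument closes.
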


\begin{remark} \label{rem:L1choosephysrealiz}
    It is always possible to find solutions of Navier-Stokes $u^\nu$ with forcing $F^\nu$ satisfying conditions \ref{i:Hi}, \ref{i:Hii} as well as \ref{i:H1i} and \ref{i:H1ii} by, for instance, choosing $u_0^\nu$, $\omega_0^\nu$, $F^\nu$  to be  mollifications  of given divergence-free $u_0 \in L^2$, $\omega_0 \in L^1$ and appropriate $F$, respectively. Indeed, this is a consequence of an easy adaptation of the proof of  \cite{VecchiWu93}*{Theorem1}. 
\end{remark}

\begin{proof}

%\LopesComment{In what follows we will eventually substitute the $L^1$ norms by $\cB\cM$ norms.}

We wish to use Theorem \ref{t:main} so we need only check that hypotheses \ref{i:Hiii} and \eqref{e:noDiracs} hold true. 

We begin by noting that, from the assumptions \ref{i:Hi}, \ref{i:Hii}, \ref{i:H1i}, \ref{i:H1ii}, we have:
\[ \|u_0^\nu\|_{L^2} + \|F^\nu\|_{L^2_tL^2_x} + \|\omega_0^\nu\|_{L^1} + \|\curl F^\nu\|_{L^1_tL^1_x} \leq K,\]
for some $K>0$ and for all $\nu>0$.

Standard energy estimates give
\[\sup_{t\in (0,T)} \|\omega^\nu(\cdot,t)\|_{L^1} \leq \|\omega^\nu_0\|_{L^1} + \|\curl F^\nu\|_{L^1_tL^1_x} \leq K.\]

Thus hypothesis \ref{i:Hiii} is verified.

We now turn to \eqref{e:noDiracs}.

For $f \in L^1(\T^2)$ let us recall the definition of the rearrangement invariant maximal function $\cM_s(f)$:
\[\cM_s(f) \equiv \sup_{E\subset \T^2, |E|=s} \;\;\int_E |f(x)| \dd x.\]

The following estimate was established in \cite{JLLN2024}*{Proposition 4.5}:
\begin{equation} \label{eq:Msestimate}
\cM_s(\omega^\nu (t)) \leq \cM_s(\omega^\nu_0) + \int_0^T \cM_s(\curl F^\nu (\tau)) \dd \tau.
\end{equation}

We use \eqref{eq:Msestimate} to propagate the uniform integrability of $\omega^\nu_0$ and $\curl F^\nu(\tau)$, which follow from \ref{i:H1i} and \ref{i:H1ii}. We conclude that $\cM_s(\omega^\nu (t)) \to 0$ as $s\to 0$, uniformly in $t \in [0,T]$ and $\nu > 0$, and, consequently, \eqref{e:noDiracs} holds true.

%\vspace{.5cm}

The proof is complete.

\end{proof}

\begin{remark} \label{rem:L1rate}
We note that, in fact, our analysis yields a rate of vanishing, with respect to viscosity, of the dissipation, albeit not very explicit, given by \eqref{eq:L1rate}.

We will see, in our next result, that, in the case of physically realizable solutions for which the singular part of the initial vorticity is a bounded, nonnegative, Radon measure in $H^{-1}$, we obtain a very explicit rate.
\end{remark}

%\section{Initial vorticities which are bounded measures}

Next we apply Theorem \ref{t:main} to show absence of anomalous dissipation for physically realizations $u^\nu$ such that $u_0^\nu$ is compact in $L^2$ and $\omega_0^\nu$ is bounded in $\cB\cM_+ + L^p$, for some $p\geq 1$. We first address the case $p>1$.

\begin{theorem} \label{thm:mainDelort}
Fix $p>1$. Let $\omega^\nu \equiv \curl u^\nu$. Assume, in addition to \ref{i:Hi} and \ref{i:Hii}, that
%\vspace{.5cm}

\begin{enumerate}[label={{\rm {\bf H2(\alph*)}}}, ref=
\textcolor{black}{{\rm {\bf H2(\alph*)}}},nosep]
    \item \label{i:H2i}
    $\omega_0^\nu =\mu_0^\nu + w_0^\nu$, with $\{\mu_0^\nu\}_{\nu>0}$ bounded in $\cB\cM(\T^2)$, $\mu_0^\nu \geq 0$, and $\{w_0^\nu\}_{\nu>0}$ bounded in $L^p(\T^2)$;
    \item[]
    \item \label{i:H2ii} $\{\curl F^\nu\}_{\nu>0}$ bounded in $L^1(0,T;L^p(\T^2))$.
    \end{enumerate}

%\vspace{.5cm}

Then 
\[\limsup_{\nu \to 0^+} \nu \int_0^T \|\omega^\nu(\tau)\|_{L^2}^2 \dd \tau = 0.\]
\end{theorem}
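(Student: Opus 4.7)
The plan is to apply Theorem \ref{t:main} by verifying its hypotheses \ref{i:Hiii} and \eqref{e:noDiracs}; the distinctive feature of the present setting is that the data decomposition $\omega_0^\nu = \mu_0^\nu + w_0^\nu$ must be propagated to positive times through the (linear in $\omega^\nu$) transport--diffusion vorticity equation.

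Verifying \ref{i:Hiii} is straightforward: since $L^p \embeds L^1$ on $\T^2$ for $p>1$, the data $\omega_0^\nu$ are uniformly bounded in $\cB\cM(\T^2)$, and the standard $L^1$-contraction for transport--diffusion by a divergence-free field yields
\[
\|\omega^\nu(\cdot,t)\|_{L^1} \leq \|\mu_0^\nu\|_{\cB\cM} + \|w_0^\nu\|_{L^p} + \int_0^T \|\curl F^\nu(\cdot,\tau)\|_{L^p}\,\mathrm{d}\tau \leq K
\]
for all $t \in (0,T)$ and $\nu>0$.

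For \eqref{e:noDiracs} I would introduce, for each $\nu$, the splitting $\omega^\nu = \mu^\nu + w^\nu$, where $\mu^\nu$ solves $\partial_t \mu^\nu + u^\nu\cdot\nabla \mu^\nu = \nu\Delta\mu^\nu$ with initial datum $\mu_0^\nu$ and no forcing, and $w^\nu$ solves the analogous equation with initial datum $w_0^\nu$ and forcing $\curl F^\nu$; both problems are linear since $u^\nu$ is a given divergence-free field. The maximum principle preserves the sign of $\mu^\nu$, and mass conservation combined with $L^p$-contraction give uniform bounds on $\|\mu^\nu(\cdot,t)\|_{\cB\cM} = \|\mu_0^\nu\|_{\cB\cM}$ and on $\|w^\nu(\cdot,t)\|_{L^p}$. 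Since $u^\nu$ is uniformly bounded in $L^\infty_t L^2_x$ by the energy estimate, $\omega^\nu(\cdot,t)$ is uniformly bounded in $H^{-1}(\T^2)$; since $L^p \embeds H^{-1}$ on $\T^2$ for $p>1$ we also control $\|w^\nu(\cdot,t)\|_{H^{-1}}$; hence $\mu^\nu(\cdot,t) = \omega^\nu(\cdot,t) - w^\nu(\cdot,t)$ has uniformly bounded $H^{-1}$ norm. Applying Schochet's estimate \eqref{eq:logrho_estimate} to the nonnegative measure $\mu^\nu(\cdot,t)$ and H\"older's inequality to $w^\nu(\cdot,t)$ we deduce
\[
\sup_{z \in \T^2} \int_{|z-y|<r} |\omega^\nu(y,t)|\,\mathrm{d}y \leq C|\log r|^{-1/2} + C r^{2(p-1)/p},
\]
uniformly in $\nu > 0$ and $t \in (0,T)$, establishing \eqref{e:noDiracs}. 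The subtle step is the uniform $H^{-1}$ bound on $\mu^\nu$: it does not come from a maximum principle, but only from the difference identity $\mu^\nu = \omega^\nu - w^\nu$ together with the $L^2$ energy bound on $u^\nu$.

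Theorem \ref{t:main} then delivers $\limsup_{\nu\to 0^+}\zeta^\nu(T) = 0$. To recover the explicit rate alluded to in Remark \ref{rem:L1rate}, I would instead rerun the proof of Theorem \ref{t:main} verbatim with Proposition \ref{prop:DelortThetaExist} in place of Proposition \ref{prop:L1XiExist}; since the resulting sublinear modulus satisfies $\Phi(x) = \mathcal{O}(x|\log x|^{-1/4})$ for large $x$, the quantity $\Phi(x_\nu)/x_\nu$ appearing in the analog of \eqref{eq:L1rate} is of order $|\log \nu|^{-1/4}$, giving an explicit dissipation rate of this order.
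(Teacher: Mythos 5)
Your proposal follows essentially the same route as the paper: verify \ref{i:Hiii} by $L^1$-contraction, split $\omega^\nu=\mu^\nu+w^\nu$ via the two linear transport--diffusion problems, use the maximum principle and $L^p$-contraction for $w^\nu$, obtain the uniform $H^{-1}$ bound on $\mu^\nu$ by subtraction from $\omega^\nu$, and conclude \eqref{e:noDiracs} from Schochet's estimate \eqref{eq:logrho_estimate} plus H\"older, then invoke Theorem \ref{t:main} with the function $\Upsilon$ of Proposition \ref{prop:DelortThetaExist} to get the $|\log\nu|^{-1/4}$ rate. This matches the paper's argument (and Remark \ref{rem:Delortrate}), with your write-up merely making explicit the small-disc estimate that the paper leaves implicit.
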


\begin{remark} \label{rem:Delortchoosephysrealiz}
As in Remark \ref{rem:L1choosephysrealiz} we note that there always exists a family of Navier-Stokes solutions $u^\nu$ satisfying conditions \ref{i:Hi}, \ref{i:Hii}, \ref{i:H2i} and \ref{i:H2ii}. Again, it is enough to take $u_0^\nu$ and $F^\nu$ to be mollifications (in space) of $u_0\in L^2(\T^2)$, $\omega_0 = \mu_0 + w_0$, with $\mu_0 \in \mathcal{BM}_+$, $w_0 \in L^p$, and appropriate $F$. That such initial data and forcing leads to a physically realizable solution is an easy adaptation of \cite{Majda1993}, see also \cite{Delort1991} and \cite{Schochet1995}.
\end{remark}

\begin{proof}
    We start by noticing that the hypotheses imply
\[ \|u_0^\nu\|_{L^2} + \|F^\nu\|_{L^2_tL^2_x} + \|\mu_0^\nu\|_{\cB\cM} + \|w_0^\nu\|_{L^p}+ \|\curl F^\nu\|_{L^1_tL^p_x} \leq K,\]
for some $K>0$ and for all $\nu>0$. We also have, increasing the constant $K$ without renaming,  that
\[\|w_0^\nu\|_{L^1} + \|\curl F^\nu\|_{L^1_tL^1_x} \leq K.\]

As in Theorem \ref{t:main}, \eqref{eq:energest}, \eqref{eq:bdddissip}, \eqref{eq:enstrest}, we note that
\[\|u^\nu(\cdot,t)\|_{L^2} \leq C, \; \nu \int_0^t \|\omega^\nu(\cdot,\tau)\|_{L^2}^2 \dd \tau \leq C, \text{ and } \|\omega^\nu(\cdot,t)\|_{L^2}^2 \leq \frac{C}{t\nu},\]
for some $C=C(K,T)>0$ and for all $t>0$.

We now derive additional vorticity estimates.
We write the vorticity as
\[\omega^\nu = \mu^\nu + w^\nu,\]
where $\mu^\nu$ satisfies the equation
\begin{equation} \label{eq:muequation}
\partial_t \mu^\nu + u^\nu \cdot \nabla \mu^\nu = \nu \Delta \mu^\nu,    
\end{equation}
with $\mu^\nu(\cdot,0)=\mu^\nu_0$. Consequently $w^\nu$ is a solution of
\begin{equation} \label{eq:wequation}
\partial_t w^\nu + u^\nu \cdot \nabla w^\nu = \nu \Delta w^\nu + \curl F^\nu,    
\end{equation}
and $w^\nu(\cdot,0)=w^\nu_0$.

Standard estimates and the parabolic maximum principle now give
\[\|\mu^\nu(\cdot,t)\|_{L^1} \leq \|\mu^\nu_0\|_{\cB\cM},\]
\[\text{sign}[\mu^\nu(\cdot,t)]=\text{sign}[\mu^\nu_0], \]
and
\[\|w^\nu(\cdot,t)\|_{L^p}\leq \|w^\nu_0\|_{L^p} + \|\curl F^\nu\|_{L^1_tL^p_x}.\]
We also have
\[\|w^\nu(\cdot,t)\|_{L^1}\leq \|w^\nu_0\|_{L^1} + \|\curl F^\nu\|_{L^1_tL^1_x}.\]

Furthermore, since $\{u^\nu\}$ is bounded in $L^\infty(0,T;L^2(\T^2))$ it follows that
$\{\omega^\nu(\cdot,t)\}$ is bounded in $H^{-1}(\T^2)$. Recall that $L^q(\T^2)$ is continuously embedded into $H^{-1}(\T^2)$ for all $q>1$. Thus $w^{\nu}(\cdot,t)$ is bounded in $H^{-1}(\T^2)$. Therefore $\mu^{\nu}(\cdot,t)$ is also bounded in $H^{-1}(\T^2)$.
We have, hence, the following collection of estimates:
\begin{enumerate}
    \item $\|\omega^\nu(\cdot,t)\|_{L^1} \leq \|\mu^\nu(\cdot,t)\|_{L^1} + \|w^\nu(\cdot,t)\|_{L^1} \leq K$,
    \item $\|w^\nu(\cdot,t)\|_{L^p} \leq K$,
    \item $\|\mu^\nu(\cdot,t)\|_{H^{-1}}\leq \|\omega^\nu(\cdot,t)\|_{H^{-1}} + \|w^\nu(\cdot,t)\|_{H^{-1}} \leq K.$
\end{enumerate}

These bounds are enough to ensure that hypotheses \ref{i:Hiii} and \eqref{e:noDiracs} are valid. Moreover, we can use the convex, increasing, superquadratic function $\Upsilon$ obtained in Proposition \ref{prop:DelortThetaExist}, in the 
 corresponding part of the proof of Theorem \ref{t:main}.

This concludes the proof.

\end{proof}

\begin{remark} \label{rem:Delortrate}
We argued, in the proof of Theorem \ref{thm:mainDelort},  that we can use the function $\Upsilon$ obtained in Proposition \ref{prop:DelortThetaExist}. Recall that $\Upsilon$ is the square of the inverse of a function $\Phi$ which, near infinity, is a multiple of $x|\log x|^{-1/4}$. Therefore, under the hypotheses of Theorem \ref{thm:mainDelort}, we find that, for small $\nu >0$, \eqref{eq:L1rate} becomes
\begin{equation} \label{e:ratelognuminusquarter} 
\zeta^\nu_\delta (T) \lesssim |\log \nu|^{-1/4}, \;\; \text{ as } \nu \to 0^+.
\end{equation}
\end{remark}

%\vspace{0.5cm}

In our final result we consider the case of vorticities in $\cB\cM_++L^1$.

\begin{corollary}
Let $\omega^\nu \equiv \curl u^\nu$. Assume, in addition to \ref{i:Hi} and \ref{i:Hii}, that
%\vspace{.5cm}

\begin{enumerate}[label={{\rm {\bf H3(\alph*)}}}, ref=
\textcolor{black}{{\rm {\bf H3(\alph*)}}},nosep]
    \item \label{i:H3i}
    $\omega_0^\nu =\mu_0^\nu + w_0^\nu$, with $\{\mu_0^\nu\}_{\nu>0}$ bounded in $\cB\cM(\T^2)$, $\mu_0^\nu \geq 0$, and $\{w_0^\nu\}_{\nu>0}$ bounded in $L^1(\T^2)$, where $\omega_0^\nu = \curl u_0^\nu$;
    \item[]
    \item \label{i:H3ii} $\{\curl F^\nu\}_{\nu>0}$ bounded in $L^1(0,T;L^1(\T^2))$.
    \end{enumerate}

%\vspace{.5cm}

Then 
\[\limsup_{\nu \to 0^+} \nu \int_0^T \|\omega^\nu(\tau)\|_{L^2}^2 \dd \tau = 0.\]
\end{corollary}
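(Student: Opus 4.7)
The plan is to verify the two hypotheses of Theorem~\ref{t:main} that are not already in the Corollary's statement---namely \ref{i:Hiii} and \eqref{e:noDiracs}---and then invoke that theorem directly. Following the proof of Theorem~\ref{thm:mainDelort}, I would write $\omega^\nu = \mu^\nu + w^\nu$, where $\mu^\nu$ and $w^\nu$ are the advection--diffusion evolutions by $u^\nu$ of the initial data $\mu_0^\nu$ and $w_0^\nu$ respectively, with the forcing $\curl F^\nu$ placed in the $w$-equation. The maximum principle yields $\mu^\nu\ge 0$, $\|\mu^\nu(t)\|_{L^1}\le \|\mu_0^\nu\|_{\cB\cM}$, and $\|w^\nu(t)\|_{L^1}\le \|w_0^\nu\|_{L^1}+\int_0^T\|\curl F^\nu(\tau)\|_{L^1}\dd \tau$; summing these and using \ref{i:H3i} and \ref{i:H3ii} gives \ref{i:Hiii}.

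For condition \eqref{e:noDiracs}, I would estimate $\int_{|y-z|<r}|\omega^\nu(y,t)|\,\dd y$ by controlling $\mu^\nu$ and $w^\nu$ separately. The nonnegative piece $\mu^\nu$, bounded in $\cB\cM$, would enter Schochet's estimate \eqref{eq:logrho_estimate} and decay like $|\log r|^{-1/2}$ as $r\to 0^+$, once $\mu^\nu(\cdot,t)$ is known to be bounded in $H^{-1}$ uniformly in $\nu$ and $t$. The $L^1$ piece $w^\nu$ I would control by the rearrangement-invariant maximal function $\cM_s$: the transport estimate $\cM_s(w^\nu(t))\le \cM_s(w_0^\nu)+\int_0^T \cM_s(\curl F^\nu(\tau))\dd \tau$ used in the proof of Theorem~\ref{thm:mainL1} propagates equi-integrability from the data and forcing to all times (in the realistic construction indicated in Remark~\ref{rem:Delortchoosephysrealiz}, where $u_0^\nu$ and $F^\nu$ arise as mollifications of fixed $L^2$/$L^1$ data, these equi-integrability properties come for free by Dunford--Pettis).

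The hard step will be the uniform $H^{-1}$ bound on $\mu^\nu(\cdot,t)$. In the $p>1$ setting of Theorem~\ref{thm:mainDelort} it came from writing $\mu^\nu = \omega^\nu - w^\nu$ and using $L^p\hookrightarrow H^{-1}(\T^2)$, an embedding that fails at $p=1$. To restore it, my plan is a further truncation of $w_0^\nu$: for each $M>0$, split $w_0^\nu = w_0^{\nu,M} + r_0^{\nu,M}$ with $w_0^{\nu,M}=w_0^\nu\chi_{|w_0^\nu|\le M}\in L^\infty\subset L^2\hookrightarrow H^{-1}$ and with $r_0^{\nu,M}$ supported on a set of Lebesgue measure at most $K/M$. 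Absorbing $(r_0^{\nu,M})_+$ and $(r_0^{\nu,M})_-$ into two auxiliary nonnegative measures (one added to $\mu_0^\nu$, the other tracked as a sign-distinguished correction) places the modified problem within the scope of Theorem~\ref{thm:mainDelort} with $p=2$ for each fixed $M$. Controlling the discrepancy between the truncated and original families of Navier--Stokes solutions via the energy balance \eqref{eq:energid}---exactly as in the handling of the small-$t$ contribution at the end of the proof of Theorem~\ref{t:main}, where the right-continuity at $t=0$ of the inviscid limit from \cite{JLLN2024} is used---and passing $M\to\infty$ by a diagonal argument should then recover the Corollary.
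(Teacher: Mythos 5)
The paper itself omits the proof, saying only that it is ``a standard combination of ideas from the proofs of Theorem \ref{thm:mainL1} and Theorem \ref{thm:mainDelort}''; your skeleton --- verify \ref{i:Hiii} and \eqref{e:noDiracs}, then invoke Theorem \ref{t:main}, handling $\mu^\nu$ via Schochet's estimate \eqref{eq:logrho_estimate} and $w^\nu$ via the maximal-function propagation \eqref{eq:Msestimate} --- is exactly that combination, and you deserve credit for isolating the genuine crux: the uniform $H^{-1}$ bound on the measure part can no longer be obtained by subtraction, because $L^1(\T^2)\not\hookrightarrow H^{-1}(\T^2)$. Two problems remain, one of bookkeeping and one of substance. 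The bookkeeping one: the maximal-function step needs equi-integrability (weak $L^1$ compactness, as in \ref{i:H1i}--\ref{i:H1ii}) of $\{w_0^\nu\}$ and $\{\curl F^\nu(\cdot,\tau)\}$, not the mere $L^1$ boundedness written in \ref{i:H3i}--\ref{i:H3ii}; you acknowledge this only by retreating to the mollification scenario of Remark \ref{rem:Delortchoosephysrealiz}, whereas your proof should state explicitly that it uses equi-integrability (your truncation step uses it again, to make $\|r_0^{\nu,M}\|_{L^1}$ small uniformly in $\nu$).

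The substantive gap is in your resolution of the crux. Absorbing $(r_0^{\nu,M})_+$ into $\mu_0^\nu$ and tracking $-(r_0^{\nu,M})_-$ as a ``sign-distinguished correction'' does not place the problem ``within the scope of Theorem \ref{thm:mainDelort} with $p=2$'': Proposition \ref{cor:Delortconcave} and Theorem \ref{thm:mainDelort} require the measure part to be nonnegative \emph{and} uniformly in $H^{-1}$, they do not accommodate an extra nonpositive measure, and the tails $(r_0^{\nu,M})_\pm$ come with no uniform $H^{-1}$ control --- the obstruction you set out to remove has merely been relocated. Worse, ``controlling the discrepancy between the truncated and original families of Navier--Stokes solutions via the energy balance'' asks for a stability estimate for the dissipation functional of a \emph{nonlinear} problem under $L^1$-small perturbations of the initial vorticity; nothing in the paper (in particular not \eqref{eq:energid}, which compares a solution only with itself) supplies this. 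The repair is to leave the Navier--Stokes solutions $u^\nu$ untouched and only re-split their vorticity through the \emph{linear} advection--diffusion equation with drift $u^\nu$: let $w^{\nu,M}$ evolve from the truncated data $w_0^\nu 1_{\{|w_0^\nu|\le M\}}$ (and truncated forcing), so that $w^{\nu,M}$ is bounded in $L^\infty_tL^2_x$ for each fixed $M$, and set $\sigma^{\nu,M}=\omega^\nu-w^{\nu,M}$. Then $\sigma^{\nu,M}$ is uniformly bounded in $H^{-1}$ by subtraction, and by linearity and the maximum principle its negative part has total mass at most $\varepsilon_M\to 0$ as $M\to\infty$. Schochet's test-function argument applies verbatim to such almost nonnegative measures: with $\phi_r\ge 1_{B(z,r)}$, $0\le\phi_r\le 1$, $\|\phi_r\|_{H^1}\le C|\log r|^{-1/2}$, one gets
\begin{equation*}
|\sigma^{\nu,M}|(B(z,r))\;\le\;C\,\|\sigma^{\nu,M}\|_{H^{-1}}\,|\log r|^{-1/2}\;+\;2\varepsilon_M ,
\end{equation*}
and \eqref{e:noDiracs} follows by letting $r\to 0^+$ and then $M\to\infty$; no comparison of distinct Navier--Stokes solutions is ever needed.
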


The proof is a standard combination of ideas from the proofs of Theorem \ref{thm:mainL1} and Theorem \ref{thm:mainDelort}, so we omit it.

\section{Comments and conclusions}

Hypothesis \eqref{e:noDiracs} means that no Dirac deltas appear in the vanishing viscosity limit of vorticity. This is a condition which has appeared many times in the literature and, in particular, it is central to J.-M. Delort's existence result for vortex sheet evolution with distinguished sign, see \cite{Delort1991} and see also \cite{Schochet1995}. A weaker, time-averaged, version of this hypothesis was used to extend Delort's result to the case of nonnegative mirror-symmetric vortex sheets, see \cite{LNX2001}. This raises the natural question as to whether no anomalous dissipation can be established under this time-averaged version of \eqref{e:noDiracs} as well. In fact, this issue was already addressed by De Rosa and Park in \cite{DeRosaPark2024} with an affirmative answer. We observe, below, that our version can also be carried out under this weaker hypothesis and we obtain rates as well. More precisely, let
 $0<t_1<t_2<T$. Consider a family $\cF \subset L^\infty(t_1,t_2;L^1(\T^2))$ and assume the following generalizations of \ref{i:A} and \ref{i:B} are valid:
\begin{enumerate}[label={\bf (\Alph*)}, ref=
\textcolor{black}{\bf \Alph*'},nosep]
    \item[\bf{(A')}] \label{i:A'} $\|f\|_{L^\infty_t L^1_x} \leq K$ for all $f\in \cF$, and
    \item[]
    \item[\bf{(B')}] \label{i:B'} \[ \lim_{r \to 0^+} \; \sup_{f \in \cF} \, \int_{t_1}^{t_2} \sup_{z\in \T^2}\int_{\{|z-y| < r\}} |f(y,t)| \dd y \dd t\, = \, 0.\]
\end{enumerate}
If, additionally, $\cF \subset L^2(t_1,t_2;H^1(\T^2))$ then it is possible to prove generalizations of Proposition \ref{cor:L1concave}, redefining appropriately the function $\eta$, and Proposition \ref{prop:L1XiExist}, so that $\|f\|_{L^2(\T^2)}$ and $\|\nabla f\|_{L^2(\T^2)}$ are substituted by $\|f\|_{L^2((t_1,t_2)\times \T^2)}$ and $\|\nabla f\|_{L^2((t_1,t_2)\times \T^2)}$, respectively. The function $\Upsilon$ in the statement of Proposition \ref{prop:L1XiExist} can be chosen independent of $t_1$, $t_2$.

Moreover, if {\bf(B')} is substituted by
\begin{enumerate}[label={\bf (\Alph*)}, ref=
\textcolor{black}{\bf \Alph*'},nosep]
    \item[\bf{(B'')}] \label{i:B''} \[ \sup_{f \in \cF} \, \int_{t_1}^{t_2} \sup_{z\in \T^2}\int_{\{|z-y| < r\}} |f(y,t)| \dd y \dd t\, \lesssim \, |\log r|^{-1/2}.\]
\end{enumerate}
then we can generalize Propositions \ref{cor:Delortconcave},  and \ref{prop:DelortThetaExist}. The function $\Upsilon$ in the statement of Proposition \ref{prop:DelortThetaExist} can be chosen independent of $t_1$, $t_2$ and follows the same construction as before.

Therefore, if we assume the weaker hypothesis {\bf(B')} for the family $\mathcal{F} = \{\omega^\nu\}_{\nu>0}$ then the conclusion of Theorem \ref{t:main} remains valid. If $\mathbf{(B^{\prime\prime})}$ holds for the singular part of vorticity, while the continuous part is bounded in $L^p$ for some $p>1$,  then we can obtain the same rate of dissipation as in \eqref{e:ratelognuminusquarter}.

The proof of the results claimed above is a straightforward adaptation of the work presented here. 

Next we wish to discuss a particular illustration of anomalous dissipation. For simplicity, we will present the example in the full plane.  Let $\varphi \in C^\infty_c(0,+\infty)$ and assume that
\[\int_0^{+\infty} s\varphi (s) \dd s = 0 \qquad \text{ and } \int_0^{+\infty} s|\varphi(s)| \dd s = \frac{1}{2\pi}.\]
Let $\omega_0=\omega_0(x) = \varphi(|x|)$, $x \in \real^2$. For each $\nu > 0$ set 
\[\omega_0^\nu = \omega_0^\nu (x) = \frac{1}{\nu^2} \omega_0 \left( \frac{x}{\nu}\right).\]
Then $\dsp{\int_{\real^2}} \omega_0^\nu (x) \dd x = 0$ and $\|\omega_0^\nu\|_{L^1(\real^2)}= 1$. 
Note that, if $\omega^\nu$ is the solution of the heat equation $\partial_t \omega^\nu = \nu \Delta \omega^\nu$ with initial data $\omega_0^\nu$ then $\omega^\nu$ is also the solution of the (unforced) Navier-Stokes equations with velocity $u^\nu (x,t) = \dsp{\frac{x^\perp}{2\pi |x|^2} \int_{B(0;|x|)} \omega^\nu (y,t) \dd y}$ since, in this case, the nonlinear term is curl-free. It is straightforward to show that:
\begin{enumerate}
    \item $\|\omega^\nu(\cdot,t)\|_{L^1} \leq 1$
    \item $\dsp{\nu \int_0^T \| \omega^\nu(\cdot,s)\|_{L^2}^2 \dd s } \geq C > 0$, for all $\nu > 0$.
\end{enumerate} 
Referring back to Theorem \ref{t:main} we note that \eqref{i:Hii} and \eqref{i:Hiii} hold true, but
\[|u^\nu_0|^2 \rightharpoonup C\delta_0 \text{ and } |\omega_0^\nu| \rightharpoonup \tilde{C} \delta_0, \text{ weakly in } L^1,\]
so that both \eqref{i:Hi}, strong convergence of the initial data, and \eqref{e:noDiracs}, the no-Diracs condition, are violated. It would be interesting to 
have examples that show that each one of these conditions are needed to prove absence of anomalous dissipation, specially the no-Diracs condition.

Finally, let us add a couple of final remarks and open problems. First, absence of anomalous dissipation does not imply strong convergence of the approximating sequence. This remains  an important open problem, equivalent to absence of {\it inviscid dissipation} for the limiting flow, as it was proved in \cite{JLLN2024}. Second, the rates of vanishing for the dissipation which we obtained here were not shown to be optimal.

\section*{Acknowledgments}  TME acknowledges support from a Simons Fellowship and the NSF DMS-2043024. MCLF was partially supported by CNPq, through grant \# 304990/2022-1, and by FAPERJ, through  grant \# E-26/201.209/2021. HJNL acknowledges the support of CNPq, through  grant \# 305309/2022-6, and that of FAPERJ, through  grant \# E-26/201.027/2022. This work was carried out while the authors were visiting AIM and the Department of Mathematics at Princeton  University. They thank both for their gracious hospitality. 
%\appendix
%\section{} \label{app:bla}
\bibliography{bibNoAnomDiss.bib}{}
\bibliographystyle{plain}

\end{document}